\def\tablenotes{\bgroup\parfillskip=0pt plus 1fil
\leftskip=0pt\relax \rightskip=0pt
\vskip2pt\footnotesize}
\def\endtablenotes{\vskip1pt\egroup}
\def\XXint#1#2#3{{\setbox0=\hbox{$#1{#2#3}{\int}$}
     \vcenter{\hbox{$#2#3$}}\kern-.5\wd0}}
\def\pr(#1){\left({#1}\right)}
\def\br[#1]{\left[{#1}\right]}
\def\fbr[#1]{\!\left[{#1}\right]\!}
\def\CC{{\cal C}}
\def\addtab#1={#1\;&=}
\def\xc{{\bf x}_N^{\rm cheb}}
\def\tc{\boldsymbol{\theta}_N^{\rm cheb}}
\def\cc{{\bf c}_N^{\rm cheb}}
\def\cj{{\bf c}_N^{\rm jac}}
\def\wj#1{{\bf w}_N^{(#1)}}
\def\cl{{\bf c}_N^{\rm leg}}
\def\TN{{\bf T}_N(\xc)}
\def\PN#1{{\bf P}_N^{(#1)}(\xc)}
\def\xct{{\bf x}_{2N}^{\rm cheb}}
\def\tct{\boldsymbol{\theta}_{2N}^{\rm cheb}}
\def\wjt#1{{\bf w}_{2N}^{(#1)}}
\def\TNt{{\bf T}_{2N}(\xct)}
\def\PNt#1{{\bf P}_{2N}^{(#1)}(\xct)}
\newcommand{\diag}{\operatorname{diag}}
\newcommand{\logop}{\operatorname{log1p}}
\newcommand{\argmin}{\operatornamewithlimits{arg\,min}}
\begin{document}

\title{On the use of Hahn's asymptotic formula and stabilized recurrence for a fast, simple, and stable Chebyshev--Jacobi transform}
% Short title for running heads:
\shorttitle{The Chebyshev--Jacobi Transform}

\author{%
{\sc Richard Mika\"el Slevinsky\thanks{Corresponding author. Email: Richard.Slevinsky@maths.ox.ac.uk}}\\[2pt]
Mathematical Institute, University of Oxford, Woodstock Road, Oxford OX2 6GG, UK}
% Short list of authors for running heads:
\shortauthorlist{R. M. Slevinsky}

\maketitle

\begin{abstract}
% Body of abstract:
{We describe a fast, simple, and stable transform of Chebyshev expansion coefficients to Jacobi expansion coefficients and its inverse based on the numerical evaluation of Jacobi expansions at the Chebyshev--Lobatto points. This is achieved via a decomposition of Hahn's interior asymptotic formula into a small sum of diagonally scaled discrete sine and cosine transforms and the use of stable recurrence relations. It is known that the Clenshaw--Smith algorithm is not uniformly stable on the entire interval of orthogonality. Therefore, Reinsch's modification is extended for Jacobi polynomials and employed near the endpoints to improve numerical stability.}
% Keywords:
{Chebyshev expansions; Jacobi expansions; fast Fourier transform; asymptotic approximations.}
\end{abstract}

\section{Introduction}

Chebyshev expansions:
\begin{equation}\label{eq:Tn}
p_N(x) = \sum_{n=0}^N c_n^{\rm cheb}T_n(x),\qquad x\in[-1,1],
\end{equation}
where $T_n(\cos\theta) = \cos(n\theta)$ are ubiquitous in numerical analysis, approximation theory and pseudo-spectral methods for their near-best approximation, fast evaluation via the discrete cosine transform, and fast linear algebra for Chebyshev spectral methods, among the many other properties that facilitate their convenient use~(see e.g.~\citet{Mason-Handscomb-02,Olver-et-al-NIST-10,Trefethen-12}).

Jacobi expansions:
\begin{equation}\label{eq:Pabn}
p_N(x) = \sum_{n=0}^N c_n^{\rm jac}P_n^{(\alpha,\beta)}(x),\qquad x\in[-1,1],\quad \alpha,\beta>-1,
\end{equation}
also have useful properties. The Jacobi polynomials are orthogonal with respect to $L^2([-1,1],w^{(\alpha,\beta)}(x){\rm\,d}x)$, where $w^{(\alpha,\beta)}(x) = (1-x)^\alpha(1+x)^\beta$ is the Jacobi weight. Jacobi expansions are therefore useful in pseudo-spectral methods where it is more natural to measure the error in Jacobi weighted Hilbert spaces~(see~\citet{Li-Shen-79-1621-10}). As well~\citet{Wimp-McCabe-Connor-82-447-97} show that the Jacobi weighted finite Hilbert and Cauchy transforms are diagonalized by Jacobi polynomials.

For $N\in\mathbb{N}$, define $\tc$ as the vector of $N+1$ equally spaced angles:
\begin{equation}
[\tc]_n = \tfrac{\pi n}{N},\qquad n=0,\ldots,N,
\end{equation}
and the vector of $N+1$ Chebyshev--Lobatto points $\xc = \cos\tc$. We express the vectors of the evaluation of the expansion~\eqref{eq:Tn} and~\eqref{eq:Pabn} at ${\bf x}_N^{\rm cheb}$ as the equality of the matrix-vector products:
\begin{equation}\label{eq:PTNPN}
p_N(\xc) = \TN\cc = \PN{\alpha,\beta} \cj,
\end{equation}
where the entries of the matrices are:
\begin{equation}
\left[\TN\right]_{i,j} = T_{i-1}([\xc]_{j-1}),\qquad \left[\PN{\alpha,\beta}\right]_{i,j} = P_{i-1}^{(\alpha,\beta)}([\xc]_{j-1}).
\end{equation}
We define the forward Chebyshev--Jacobi transform to be:
\begin{equation}\label{eq:CJT}
\cc = \TN^{-1}\PN{\alpha,\beta}\cj,
\end{equation}
and the inverse Chebyshev--Jacobi transform by:
\begin{equation}\label{eq:iCJT}
\cj = \PN{\alpha,\beta}^{-1}\TN\cc.
\end{equation}

\subsection{Previous work on computing Legendre, Gegenbauer, and Jacobi expansion coefficients}

The origins of the method proposed and analyzed in this paper start with the fast eigenfunction transform of~\citet{Orszag-13-86}. The novelty of his approach, which is improved by~\citet{Mori-Suda-Sugihara-40-3612-99} is the observation that, for large $N$, the matrix $\PN{0,0}$ is well approximated by a small sum of diagonally scaled Discrete Cosine Transforms of type-I (DCT-I's) and Discrete Sine Transforms of type-I (DST-I's). However, by not accounting for the region in the $N$-$x$ plane where the matrix significantly differs from the interior asymptotics, their initial advances were unstable.

Families of orthogonal polynomials are related by the so-called connection coefficients~\citep[p. 357]{Andrews-Askey-Roy-98}. The connection coefficients fill in a lower-triangular matrix that allows conversion between two different families of orthogonal polynomials.~\citet{Alpert-Rokhlin-12-158-91} leverage the asymptotically smooth functions which define the connection coefficients between Chebyshev and Legendre polynomials for an ${\cal O}(N\log N)$ hierarchical approach to the Chebyshev--Legendre transform. This hierarchical approach has been extended by~\citet{Keiner-31-2151-09} for expansions in Gegenbauer polynomials.

When transforming polynomial expansions of analytic functions, an alternative approach to hierarchical decomposition of the connection coefficients can be used. With geometric decay in the coefficients of both the source expansion and the target expansion, the algebraic off-diagonal decay of the connection coefficients has been used by~\citet{Cantero-Iserles-50-307-12} and~\citet{Wang-Huybrechs-14a} for ${\cal O}(N\log N+MN)$ Gegenbauer and Jacobi expansion coefficients of analytic functions, where $M\in\mathbb{N}$ is a parameter.

In principle, the hierarchical approach of~\citet{Alpert-Rokhlin-12-158-91} can be adapted to the Jacobi connection coefficients for an ${\cal O}(N\log N)$ algorithm. However, this approach will also be saddled with the same high pre-computation of the hierarchical matrix. Instead, we extend the approach of~\citet{Hale-Townsend-36-A148-14} by developing fast and numerically stable evaluation of Jacobi polynomials at the Chebyshev--Lobatto points. This approach does not have high pre-computation nor does it require analyticity of the function underlying the expansion. Indeed, the transform produces high absolute accuracy for expansion coefficients of a function with any regularity $\CC^\rho[-1,1]$, $\rho\ge0$. In exchange, we accept an asymptotically slower algorithm.

\citet{Hale-Townsend-36-A148-14} advocate for a modification of the approach of~\citet{Mori-Suda-Sugihara-40-3612-99} based on a block partitioning of the matrix $\PN{0,0}$ into an ${\cal O}(\log N/\log\log N)$ number of partitions within which the interior asymptotics of the Legendre polynomials are guaranteed accurate and the remainder of the matrix is evaluated via recurrence relations. The balancing of operations between stable fast transforms in the blocks with the recurrence relations leads to the complexity ${\cal O}(N\log^2N/\log\log N)$. While asymptotically slower than the hierarchical decomposition of~\citet{Alpert-Rokhlin-12-158-91}, Hale and Townsend advocate that the partitioning algorithm is a practical alternative with a smaller setup cost.

\citet{Hale-Townsend-36-A148-14} leave behind a mystery regarding the discrepancy in the numerically computed error in the coefficients and the theoretical estimates based on model coefficients. In particular, they show that for Legendre coefficients $[{\bf c}_N^{\rm leg}]_n = {\cal O}(n^{-r})$, and for some $r\in\mathbb{R}$, the sup-norm in applying $\PN{0,0}$ is asymptotically:
\begin{equation}
\left\|\PN{0,0}\cl\right\|_\infty =
\left\{\begin{array}{cc}
{\cal O}(N^{1-r}), & r<1,\\
{\cal O}(\log N), & r=1,\\
{\cal O}(1), & r > 1,
\end{array}\right.\quad{\rm as}\quad N\to\infty,
\end{equation}
but in their numerical experiments they observed the larger errors:
\begin{equation}
{\rm Observed~Error} = 
\left\{\begin{array}{cc}
{\cal O}(N^{\frac{3}{2}-r}/\log N), & r=0,\frac{1}{2},1,\\
{\cal O}(1), & r = \frac{3}{2},
\end{array}\right.\quad{\rm as}\quad N\to\infty.
\end{equation}

For the Chebyshev--Legendre transform and more generally for the Chebyshev--Jacobi transform, this mystery is solved here by an extension of Reinsch's modification of the Clenshaw--Smith algorithm to the Jacobi polynomials. It is known that the Clenshaw--Smith algorithm is not uniformly stable on the entire interval of orthogonality, i.e. the error bound of the recurrence relation is spatially dependent. In particular, the loss of accuracy near the endpoints of the interval $[-1,1]$ is significant. Reinsch suggested a modification of Clenshaw's algorithm near the endpoints; the modification is extended by~\citet{Levrie-Piessens-74-85} to the Clenshaw--Smith algorithm for Legendre, ultraspherical, and Laguerre polynomials; and here, we extend it to the Jacobi polynomials.

\subsection{General definitions and properties}

The Gamma function is defined for all $\Re z>0$ by~\citet{Abramowitz-Stegun-65}:
\begin{equation}
\Gamma(z) = \int_0^\infty x^{z-1} e^{-x}{\rm\,d}x,
\end{equation}
and it is analytically continued to $z\in\mathbb{C}\setminus\{-\mathbb{N}_0\}$ by the property $\Gamma(z+1) = z\Gamma(z)$.

The Pochhammer symbol is then defined by~\citet{Abramowitz-Stegun-65}:
\begin{equation}
(x)_n = \dfrac{\Gamma(x+n)}{\Gamma(x)},
\end{equation}
and the beta function is defined similarly by~\citet{Abramowitz-Stegun-65}:
\begin{equation}
{\rm B}(x,y) = \dfrac{\Gamma(x)\Gamma(y)}{\Gamma(x+y)}.
\end{equation}

Jacobi polynomials have the Rodrigues formula~\citep[\S 18.5]{Olver-et-al-NIST-10}:
\begin{equation}
P_n^{(\alpha,\beta)}(x) = \dfrac{(-1)^n}{2^n n!}(1-x)^{-\alpha}(1+x)^{-\beta}\dfrac{{\rm d}^n}{{\rm d}x^n}\left((1-x)^{\alpha}(1+x)^{\beta}(1-x^2)^n\right);
\end{equation}
their values at $x=\pm1$ are known:
\begin{equation}\label{eq:Pnendpts}
P_n^{(\alpha,\beta)}(1) = \binom{n+\alpha}{n},\qquad P_n^{(\alpha,\beta)}(-1) = (-1)^n\binom{n+\beta}{n};
\end{equation}
and, they satisfy the symmetry relation:
\begin{equation}\label{eq:Pnsym}
P_n^{(\alpha,\beta)}(x) = (-1)^n P_n^{(\beta,\alpha)}(-x).
\end{equation}

Their three-term recurrence relation is given by:
\begin{equation}
P_{n+1}^{(\alpha,\beta)}(x) = (A_nx+B_n)P_n^{(\alpha,\beta)}(x) - C_n P_{n-1}^{(\alpha,\beta}(x),\qquad P_{-1}^{(\alpha,\beta)}(x) = 0,\quad P_0^{(\alpha,\beta)}(x) = 1,
\end{equation}
where the recurrence coefficients are given by~\citep[\S 18.9.2]{Olver-et-al-NIST-10}:
\begin{align}
A_n & = \dfrac{(2n+\alpha+\beta+1)(2n+\alpha+\beta+2)}{2(n+1)(n+\alpha+\beta+1)},\\
B_n & = \dfrac{(\alpha^2-\beta^2)(2n+\alpha+\beta+1)}{2(n+1)(n+\alpha+\beta+1)(2n+\alpha+\beta)},\\
C_n & = \dfrac{(n+\alpha)(n+\beta)(2n+\alpha+\beta+2)}{(n+1)(n+\alpha+\beta+1)(2n+\alpha+\beta)}.
\end{align}
The relation between Jacobi polynomials of differing parameters:
\begin{equation}
(\alpha+\beta+2n+1)P_n^{(\alpha,\beta)}(x) = (\alpha+\beta+n+1)P_n^{(\alpha,\beta+1)}(x) + (\alpha+n)P_{n-1}^{(\alpha,\beta+1)}(x),
\end{equation}
combined with the symmetry relation~\eqref{eq:Pnsym}, allows for integer-valued increments and decrements of parameters with linear complexity in the degree.

\begin{lemma}[\citet{Wang-Huybrechs-14a}]
Assume that:
\begin{equation}
P_n^{(\gamma,\delta)}(x) = \sum_{k=0}^n c_{n,k}^{(\alpha,\beta,\gamma,\delta)}P_k^{(\alpha,\beta)}(x).
\end{equation}
Then the coefficients $c_{n,k}^{(\alpha,\beta,\gamma,\delta)}$ are given by:
\begin{align}
c_{n,k}^{(\alpha,\beta,\gamma,\delta)} & = \dfrac{(n+\gamma+\delta+1)_k(k+\gamma+1)_{n-k}(2k+\alpha+\beta+1)\Gamma(k+\alpha+\beta+1)}{(n-k)!\Gamma(2k+\alpha+\beta+2)}\nonumber\\
& \quad \times \,_3F_2\left( 
\begin{array}{l}
k-n,n+k+\gamma+\delta+1,k+\alpha +1\\
k+\gamma+1, 2k+\alpha+\beta+2
\end{array};
1\right),\label{eq:Pnabcdconnection}
\end{align}
where $\,_3F_2$ is a generalized hypergeometric function~\citep[\S 16.2.1]{Olver-et-al-NIST-10}.
\end{lemma}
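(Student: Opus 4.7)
The plan is to compute $c_{n,k}^{(\alpha,\beta,\gamma,\delta)}$ by orthogonality, reduce the resulting integral using the Rodrigues formula and integration by parts, and then evaluate what remains via the Gauss hypergeometric representation of Jacobi polynomials together with the Beta function.

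First I would use the orthogonality of the $P_k^{(\alpha,\beta)}$ to write
\begin{equation*}
c_{n,k}^{(\alpha,\beta,\gamma,\delta)} = \frac{1}{h_k^{(\alpha,\beta)}}\int_{-1}^{1}P_n^{(\gamma,\delta)}(x)\,P_k^{(\alpha,\beta)}(x)\,(1-x)^\alpha(1+x)^\beta\,{\rm d}x,
\end{equation*}
with $h_k^{(\alpha,\beta)} = \dfrac{2^{\alpha+\beta+1}\Gamma(k+\alpha+1)\Gamma(k+\beta+1)}{(2k+\alpha+\beta+1)\,k!\,\Gamma(k+\alpha+\beta+1)}$. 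Substituting the Rodrigues formula stated in the excerpt for $P_k^{(\alpha,\beta)}$ cancels the weight and leaves the $k$-fold derivative of $(1-x)^{\alpha+k}(1+x)^{\beta+k}$. Integrating by parts $k$ times transfers this derivative onto $P_n^{(\gamma,\delta)}$; since $\alpha,\beta>-1$, each boundary term carries a strictly positive power of $(1\mp x)$ and so vanishes at $x=\pm1$. Applying the standard derivative identity $\dfrac{{\rm d}^k}{{\rm d}x^k}P_n^{(\gamma,\delta)}(x) = 2^{-k}(n+\gamma+\delta+1)_k\,P_{n-k}^{(\gamma+k,\delta+k)}(x)$ then reduces the task to evaluating
\begin{equation*}
I_{n,k} := \int_{-1}^{1}P_{n-k}^{(\gamma+k,\delta+k)}(x)\,(1-x)^{\alpha+k}(1+x)^{\beta+k}\,{\rm d}x.
\end{equation*}

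To evaluate $I_{n,k}$, I would expand $P_{n-k}^{(\gamma+k,\delta+k)}$ as the Gauss hypergeometric series
\begin{equation*}
P_{n-k}^{(\gamma+k,\delta+k)}(x) = \frac{(\gamma+k+1)_{n-k}}{(n-k)!}\sum_{j=0}^{n-k}\frac{(k-n)_j\,(n+k+\gamma+\delta+1)_j}{(\gamma+k+1)_j\,j!}\left(\frac{1-x}{2}\right)^j,
\end{equation*}
and integrate term by term under the substitution $u=(1-x)/2$. Each summand then yields a Beta integral of the form $2^{\alpha+\beta+2k+1}\,{\rm B}(\alpha+k+j+1,\beta+k+1)$, whose $j$-dependence is precisely $\dfrac{(\alpha+k+1)_j}{(\alpha+\beta+2k+2)_j}$, producing the $_3F_2(\,\cdot\,;1)$ displayed in~\eqref{eq:Pnabcdconnection}.

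The remaining step is purely bookkeeping: one multiplies the prefactors collected from $1/h_k^{(\alpha,\beta)}$, from Rodrigues ($2^{-k}/k!$, together with the sign $(-1)^k$ which cancels against the $(-1)^k$ produced by the $k$ integrations by parts), from the derivative identity ($2^{-k}(n+\gamma+\delta+1)_k$), and from the Beta evaluation. This is where I expect the only real difficulty: verifying that the powers of $2$ cancel exactly ($-\alpha-\beta-1-k-k+\alpha+\beta+2k+1=0$) and that the Gamma ratios $\Gamma(k+\alpha+1)\Gamma(k+\beta+1)$ from $h_k^{(\alpha,\beta)}$ annihilate those from the Beta integral, leaving the compact prefactor $\dfrac{(n+\gamma+\delta+1)_k(k+\gamma+1)_{n-k}(2k+\alpha+\beta+1)\Gamma(k+\alpha+\beta+1)}{(n-k)!\,\Gamma(2k+\alpha+\beta+2)}$ once the identity $\Gamma(n+k+\gamma+\delta+1)/\Gamma(n+\gamma+\delta+1)=(n+\gamma+\delta+1)_k$ is applied. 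No conceptual obstacle remains beyond this careful Pochhammer arithmetic.
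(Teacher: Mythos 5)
Your proof is correct: orthogonality with the normalization $h_k^{(\alpha,\beta)}$, Rodrigues' formula followed by $k$-fold integration by parts (with vanishing boundary terms since $\alpha,\beta>-1$), the derivative identity $\tfrac{{\rm d}^k}{{\rm d}x^k}P_n^{(\gamma,\delta)} = 2^{-k}(n+\gamma+\delta+1)_k P_{n-k}^{(\gamma+k,\delta+k)}$, and term-by-term Beta integration of the Gauss series produce exactly the stated ${}_3F_2$ and prefactor, and your power-of-2 and Gamma cancellations check out. The paper states this lemma without proof, citing Wang \& Huybrechs, and your argument is precisely the classical derivation found in that reference (and in Andrews--Askey--Roy), so the two approaches coincide.
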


\section{The forward transform: Jacobi to Chebyshev}

In this section, we extend the algorithm of~\citet{Hale-Townsend-36-A148-14} for the Chebyshev--Legendre transform to the Chebyshev--Jacobi transform by deriving a fast algorithm to compute:
\begin{equation*}
\cc = \TN^{-1}\PN{\alpha,\beta}\cj.
\end{equation*}
$\TN$ is a diagonally scaled DCT-I that can be applied and inverted in ${\cal O}(N\log N)$ operations. 

\subsection{Interior asymptotics of Jacobi polynomials}

The interior asymptotics of Jacobi polynomials are given by~\citet{Hahn-171-201-80}. Given $M\in\mathbb{N}_0$:
\begin{equation}\label{eq:Jacasy}
P_n^{(\alpha,\beta)}(\cos\theta) = \sum_{m=0}^{M-1} C_{n,m}^{\alpha,\beta}f_m(\theta) + R_{n,M}^{\alpha,\beta}(\theta),
\end{equation}
where:
\begin{align}
C_{n,m}^{\alpha,\beta} & = \frac{2^{2n-m+\alpha+\beta+1} {\rm\,B}(n+\alpha+1,n+\beta+1)}{\pi(2n+\alpha+\beta+2)_m},\label{eq:Cnmab}\\
f_m(\theta) & = \sum_{l=0}^m\dfrac{(\tfrac{1}{2}+\alpha)_l(\tfrac{1}{2}-\alpha)_l(\tfrac{1}{2}+\beta)_{m-l}(\tfrac{1}{2}-\beta)_{m-l}}{l!(m-l)!}\dfrac{\cos\theta_{n,m,l}}{\sin^{l+\alpha+\frac{1}{2}}\left(\frac{\theta}{2}\right)\cos^{m-l+\beta+\frac{1}{2}}\left(\frac{\theta}{2}\right)},\\
\theta_{n,m,l} & = \tfrac{1}{2}(2n+\alpha+\beta+m+1)\theta - (\alpha+l+\tfrac{1}{2})\tfrac{\pi}{2},
\end{align}
and where $x=\cos\theta$. For $(\alpha,\beta)\in(-\tfrac{1}{2},\tfrac{1}{2}]^2$, and for $n\ge 2$, the remainder $R_{n,M}^{\alpha,\beta}(\theta)$ is bounded by twice the magnitude of the first neglected term in the summation, and for $\theta\in[\tfrac{\pi}{3},\tfrac{2\pi}{3}]$ the summation converges as $M\to\infty$.

Rewriting $\theta_{n,m,l}$ as:
\begin{align}
\theta_{n,m,l} & = \tfrac{1}{2}(2n+\alpha+\beta+m+1)\theta - (\alpha+l+\tfrac{1}{2})\tfrac{\pi}{2},\\
& = n\theta + (\alpha+\beta+m+1)\tfrac{\theta}{2} - (\alpha+l+\tfrac{1}{2})\tfrac{\pi}{2},\\
& = n\theta -\theta_{m,l},
\end{align}
allows us to insert the cosine addition formula into the asymptotic formula~\eqref{eq:Jacasy}. The result is:
\begin{equation}\label{eq:Jacasyuv}
P_n^{(\alpha,\beta)}(\cos\theta) = \sum_{m=0}^{M-1} \left(u_m(\theta) \cos n\theta + v_m(\theta)\sin n\theta\right)C_{n,m}^{\alpha,\beta} + R_{n,M}^{\alpha,\beta}(\theta),
\end{equation}
where:
\begin{align}
u_m(\theta) & = \sum_{l=0}^m\dfrac{(\tfrac{1}{2}+\alpha)_l(\tfrac{1}{2}-\alpha)_l(\tfrac{1}{2}+\beta)_{m-l}(\tfrac{1}{2}-\beta)_{m-l}}{l!(m-l)!}\dfrac{\cos\theta_{m,l}}{\sin^{l+\alpha+\frac{1}{2}}\left(\frac{\theta}{2}\right)\cos^{m-l+\beta+\frac{1}{2}}\left(\frac{\theta}{2}\right)},\\
v_m(\theta) & = \sum_{l=0}^m\dfrac{(\tfrac{1}{2}+\alpha)_l(\tfrac{1}{2}-\alpha)_l(\tfrac{1}{2}+\beta)_{m-l}(\tfrac{1}{2}-\beta)_{m-l}}{l!(m-l)!}\dfrac{\sin\theta_{m,l}}{\sin^{l+\alpha+\frac{1}{2}}\left(\frac{\theta}{2}\right)\cos^{m-l+\beta+\frac{1}{2}}\left(\frac{\theta}{2}\right)}.
\end{align}
Since in~\eqref{eq:Jacasyuv}, $\cos n\theta$ and $\sin n\theta$ are the only terms that depend simultaneously and inextricably on both $n$ and $\theta$, the matrix $\PN{\alpha,\beta}$ of~\eqref{eq:PTNPN} can be expressed in the compact form:
\begin{equation}\label{eq:PNASY}
\PN{\alpha,\beta}^{\rm ASY} = \sum_{m=0}^{M-1} \left( {\bf D}_{u_m(\tc)} \TN + {\bf D}_{v_m(\tc)} \sin(\tc [0,\ldots,N]^\top) \right){\bf D}_{C_{n,m}^{\alpha,\beta}} + {\bf R}_M^{\alpha,\beta}(\tc).
\end{equation}
Here, ${\bf D}_{u_m(\tc)}$ and ${\bf D}_{v_m(\tc)}$ denote diagonal matrices whose entries correspond to $u_m$ and $v_m$ evaluated at the equally spaced angles $\tc$, ${\bf D}_{C_{n,m}^{\alpha,\beta}}$ is the diagonal matrix whose entries consist of $C_{n,m}^{\alpha,\beta}$ for $n=0,\ldots,N$, and ${\bf R}_M^{\alpha,\beta}(\tc)$ is the matrix of remainders. Since $\TN$ is a diagonally scaled DCT-I and $\sin(\tc [0,\ldots,N]^\top)$ is a diagonally scaled DST-I bordered by zeros, the matrix $\PN{\alpha,\beta}^{\rm ASY}$ can be applied in ${\cal O}(M N\log N + M^2N)$ operations. However, for low degree or for $\theta\approx0$ or $\theta\approx\pi$, the approximation of $\PN{\alpha,\beta}$ by $\PN{\alpha,\beta}^{\rm ASY}$ incurs unacceptably large error. Therefore, we restrict the applicability of the matrix $\PN{\alpha,\beta}^{\rm ASY}$ to the region in the $n$-$\theta$ plane where the remainder is guaranteed to be below a tolerance $\varepsilon$ and we use recurrence relations to stably fill in\footnote{N.B. the Clenshaw--Smith algorithm~\citet{Clenshaw-9-118-55,Smith-19-33-65} for evaluation of polynomials in orthogonal polynomial bases is used rather than explicitly filling in the matrix.} the remaining entries of the matrix $\PN{\alpha,\beta}$.

\subsection{Partitioning the matrix $\PN{\alpha,\beta}$}

For $(\alpha,\beta)\in(-\tfrac{1}{2},\tfrac{1}{2}]^2$ and for $n\ge 2$, the remainder in~\eqref{eq:Jacasy} is bounded by:
\begin{equation}
|R_{n,M}^{\alpha,\beta}(\theta)| < 2C_{n,M}^{\alpha,\beta}|f_M(\theta)|.
\end{equation}
For large $n$, the following leading order asymptotics are valid:
\begin{align}
2C_{n,M}^{\alpha,\beta} & = \dfrac{2^{2n-M+\alpha+\beta+2}}{\pi}\dfrac{\Gamma(n+\alpha+1)\Gamma(n+\beta+1)}{\Gamma(2n+\alpha+\beta+M+2)},\\
& \sim \dfrac{2^{2n-M+\alpha+\beta+2}}{\pi}\dfrac{\sqrt{2\pi}\sqrt{n+\alpha}(\frac{n+\alpha}{e})^{n+\alpha}  \sqrt{2\pi}\sqrt{n+\beta}(\frac{n+\beta}{e})^{n+\beta}}{\sqrt{2\pi}\sqrt{2n+\alpha+\beta+M+1}(\frac{2n+\alpha+\beta+M+1}{e})^{2n+\alpha+\beta+M+1}},\\
& \sim \dfrac{1}{2^{2M-1}\sqrt{\pi}n^{M+\frac{1}{2}}},\quad{\rm as}\quad n\to\infty.
\end{align}
Therefore, if we set the remainder to $\varepsilon$, this will define a curve in the $n$-$\theta$ plane for every $M$ given by:
\begin{equation}\label{eq:asynthetapartition}
n \approx \left\lfloor\left( \dfrac{\varepsilon 2^{2M-1}\sqrt{\pi}}{|f_M(\theta)|}\right)^{-\frac{1}{M+\frac{1}{2}}}\right\rfloor.
\end{equation}

For $\theta\in(0,\pi)$, $|f_m(\theta)|$ is bounded by its envelope:
\begin{equation}
g_m(\theta) = \sum_{l=0}^m\dfrac{(\tfrac{1}{2}+\alpha)_l(\tfrac{1}{2}-\alpha)_l(\tfrac{1}{2}+\beta)_{m-l}(\tfrac{1}{2}-\beta)_{m-l}}{l!(m-l)!}\dfrac{\cos^{l-m-\beta-\frac{1}{2}}\left(\frac{\theta}{2}\right)}{\sin^{l+\alpha+\frac{1}{2}}\left(\frac{\theta}{2}\right)},
\end{equation}
and since $g_m(\theta)\in\CC(0,\pi)$ and:
\begin{equation}
\lim_{\theta\to0^+}g_m(\theta) = \lim_{\theta\to\pi^{-}}g_m(\theta) = +\infty,
\end{equation}
the Weierstrass extreme value theorem ensures the existence of a global minimizer:
\begin{equation}
\hat{\theta} = \argmin_{\theta\in(0,\pi)}g_m(\theta).
\end{equation}
Therefore, we find the discrete global minimizer:
\begin{equation}
\bar{\theta} = \argmin_{\theta\in\tc}g_m(\theta) \approx \hat{\theta},
\end{equation}
and collect contiguous angles such that the error in evaluating the asymptotic expansion is guaranteed to be below $\varepsilon$.

Following~\citet{Hale-Townsend-36-A148-14}, we define $n_M\in\mathbb{N}_0$:
\begin{equation}
n_M := \left\lfloor\left( \dfrac{\varepsilon 2^{2M-1}\sqrt{\pi}}{|f_M(\pi/2)|}\right)^{-\frac{1}{M+\frac{1}{2}}}\right\rfloor,
\end{equation}
and we set:
\begin{equation}
\alpha_N := \min\left(\frac{1}{\log(N/n_M)},1/2\right),\quad{\rm and}\quad K := \left\lceil \frac{\log(N/n_M)}{\log(1/\alpha_N)} \right\rceil.
\end{equation}
For $k=0,\ldots,K$, while $j_k := \alpha_N^kN > n_M$, we compute the indices $i_k^1,i_k^2$ within which the remainder falls below the tolerance $\varepsilon$ and whose angles bracket the discrete global minimizer $\bar{\theta}$:
\begin{equation}
\left[R_{j_k,M}^{\alpha,\beta}(\tc)\right]_i < \varepsilon \qquad \forall i \in \{i_k^1,\ldots,i_k^2\}, \quad [\tc]_{i_k^1} < \bar{\theta} < [\tc]_{i_k^2}.
\end{equation}

We require the following lemma.

\begin{lemma}\label{lemma:remainder}
Let $(\alpha,\beta)\in(-\tfrac{1}{2},\tfrac{1}{2}]^2$. Then for every $M\ge2$:
\begin{equation}
C_{n+1,M}^{\alpha,\beta} \le C_{n,M}^{\alpha,\beta}.
\end{equation}
\end{lemma}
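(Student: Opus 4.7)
The plan is to form the ratio $C_{n+1,M}^{\alpha,\beta}/C_{n,M}^{\alpha,\beta}$ directly from~\eqref{eq:Cnmab}, simplify it, and verify that the resulting expression does not exceed $1$. First I would observe that the power-of-two factor contributes a $4$, the Beta function ratio collapses via the identity $\Gamma(z+1) = z\Gamma(z)$, and the Pochhammer ratio telescopes since $(a)_M/(a+2)_M = a(a+1)/[(a+M)(a+M+1)]$ with $a = 2n+\alpha+\beta+2$. After these cancellations I expect to obtain
\[
\frac{C_{n+1,M}^{\alpha,\beta}}{C_{n,M}^{\alpha,\beta}} = \frac{4(n+\alpha+1)(n+\beta+1)}{(2n+\alpha+\beta+M+2)(2n+\alpha+\beta+M+3)},
\]
so that the lemma reduces to a single polynomial inequality in $n$, $\alpha$, $\beta$, and $M$.

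Next I would expand both sides of that inequality. The $4n^2$ terms cancel, and so do the $4n(\alpha+\beta)$ contributions, collapsing the claim to
\[
4nM + 2n + (\alpha+\beta+M+2)(\alpha+\beta+M+3) - 4(\alpha+1)(\beta+1) \ge 0.
\]
For $n\ge 0$ the first two summands are nonnegative, so it suffices to bound the constant piece. The hypothesis $M\ge 2$ together with $\alpha+\beta>-1$ (which follows from $(\alpha,\beta)\in(-\tfrac{1}{2},\tfrac{1}{2}]^2$) yields $(\alpha+\beta+M+2)(\alpha+\beta+M+3)>3\cdot 4 = 12$, while $\alpha+1,\beta+1\le 3/2$ gives $4(\alpha+1)(\beta+1)\le 9$, and the difference is strictly positive. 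Equivalently, when $M=2$ one can complete the square to rewrite the residual as $(\alpha-\beta)^2 + 5(\alpha+\beta) + 16$, which is at least $11$ under $\alpha+\beta>-1$; larger $M$ only makes this term bigger.

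The argument is entirely elementary, and there is no real obstacle beyond careful bookkeeping of the simplifications. The only substantive point is that \emph{both} halves of the hypothesis are used: the lower bound $M\ge 2$ is needed to push the constant piece comfortably above $4(\alpha+1)(\beta+1)$, and the parameter restriction $(\alpha,\beta)\in(-\tfrac{1}{2},\tfrac{1}{2}]^2$ enters through the bounds $\alpha+\beta>-1$ and $\alpha+1,\beta+1\le 3/2$. The computation in fact yields the strict inequality $C_{n+1,M}^{\alpha,\beta}<C_{n,M}^{\alpha,\beta}$, but the weaker form stated in the lemma is all that is required for the partitioning argument surrounding~\eqref{eq:asynthetapartition}.
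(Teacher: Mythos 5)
Your proposal is correct and takes essentially the same route as the paper: both form the ratio $C_{n+1,M}^{\alpha,\beta}/C_{n,M}^{\alpha,\beta}$ from \eqref{eq:Cnmab}, arrive at the identical expression $\frac{4(n+\alpha+1)(n+\beta+1)}{(2n+\alpha+\beta+M+2)(2n+\alpha+\beta+M+3)}$, and bound it by $1$ using $\alpha,\beta\le\tfrac{1}{2}$, $\alpha+\beta>-1$, and $M\ge2$. The only difference is bookkeeping in the last step: the paper substitutes the extremal parameter values into numerator and denominator separately and checks $(2n+3)^2\le(2n+M+2)(2n+M+1)$, whereas you expand the polynomial inequality and bound the constant residual.
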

\begin{proof}
We have:
\begin{equation}
\inf_{(\alpha,\beta)\in(-\frac{1}{2},\frac{1}{2}]^2}(\alpha+\beta) = -1,\qquad \max_{\alpha\in(-\frac{1}{2},\frac{1}{2}]}\alpha = \frac{1}{2},\qquad \max_{\beta\in(-\frac{1}{2},\frac{1}{2}]}\beta = \frac{1}{2}.
\end{equation}
Using~\eqref{eq:Cnmab}:
\begin{equation}
C_{n+1,M}^{\alpha,\beta} = \dfrac{(2n+2\alpha+2)(2n+2\beta+2)C_{n,M}^{\alpha,\beta}}{(2n+\alpha+\beta+M+3)(2n+\alpha+\beta+M+2)} \le \dfrac{(2n+3)^2C_{n,M}^{\alpha,\beta}}{(2n+M+2)(2n+M+1)} \le C_{n,M}^{\alpha,\beta}.
\end{equation}
\end{proof}

Lemma~\ref{lemma:remainder} guarantees that if $M\ge2$, the remainder $R_{n,M}^{\alpha,\beta}(\theta)$ is a non-increasing function of $n$. Therefore, the determination of the indices ensures the accuracy of the asymptotic formula within the rectangles $[[\tc]_{i_k^1},[\tc]_{i_k^2}]\times[j_{k},j_{k-1}]$, for $k=1,\ldots,K$, as depicted in Figure~\ref{fig:asypluspartitioning}.

Lastly, for $k=1,\ldots,K$, define:
\begin{equation}
\PN{\alpha,\beta}^{{\rm ASY},k} = \diag({\bf 0}_{0:i_k^1-1},{\bf 1}_{i_k^1:i_k^2},{\bf 0}_{i_k^2+1:N})\PN{\alpha,\beta}^{\rm ASY}\diag({\bf 0}_{0:j_k-1},{\bf 1}_{j_k:j_{k-1}},{\bf 0}_{j_{k-1}+1:N}),
\end{equation}
to be matrices of the asymptotic formula~\eqref{eq:PNASY} within $[[\tc]_{i_k^1},[\tc]_{i_k^2}]\times[j_k,j_{k-1}]$, and:
\begin{equation}
[\PN{\alpha,\beta}^{\rm REC}]_{i,j} = \left\{\begin{array}{cc} \PN{\alpha,\beta}_{i,j}, & i < i_k^1~{\rm or}~i>i_k^2,~j< j_{k-1},~k=1,\ldots,K,\\
\PN{\alpha,\beta}_{i,j}, & i_K^1\le i\le i_K^2,~j<j_K,\\
0, & {\rm otherwise},\end{array}\right.
\end{equation}
which is computed via recurrence relations. Then, the numerically stable formula:
\begin{equation}
\PN{\alpha,\beta} = \PN{\alpha,\beta}^{\rm REC} + \sum_{k=1}^K \PN{\alpha,\beta}^{{\rm ASY},k},
\end{equation}
can be computed in ${\cal O}(N\log^2N/\log\log N)$ operations. For detailed leading order estimates, see Appendix~\ref{appendix:complexity}.

\begin{figure}[htpb]
\begin{center}
\begin{tabular}{cc}
\hspace*{-0.5cm}\includegraphics[width=0.475\textwidth]{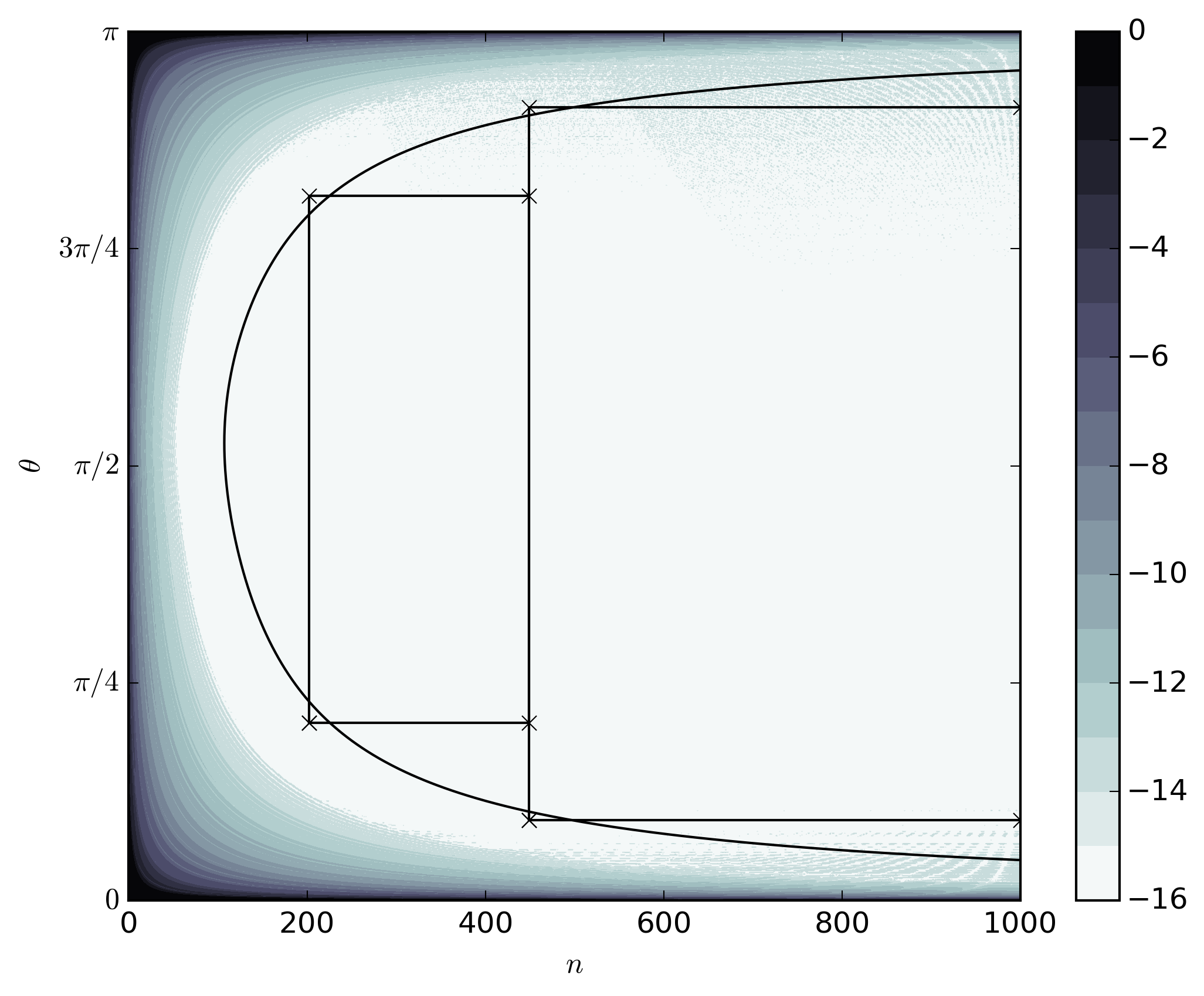}&
\hspace*{-0.5cm}\includegraphics[width=0.475\textwidth]{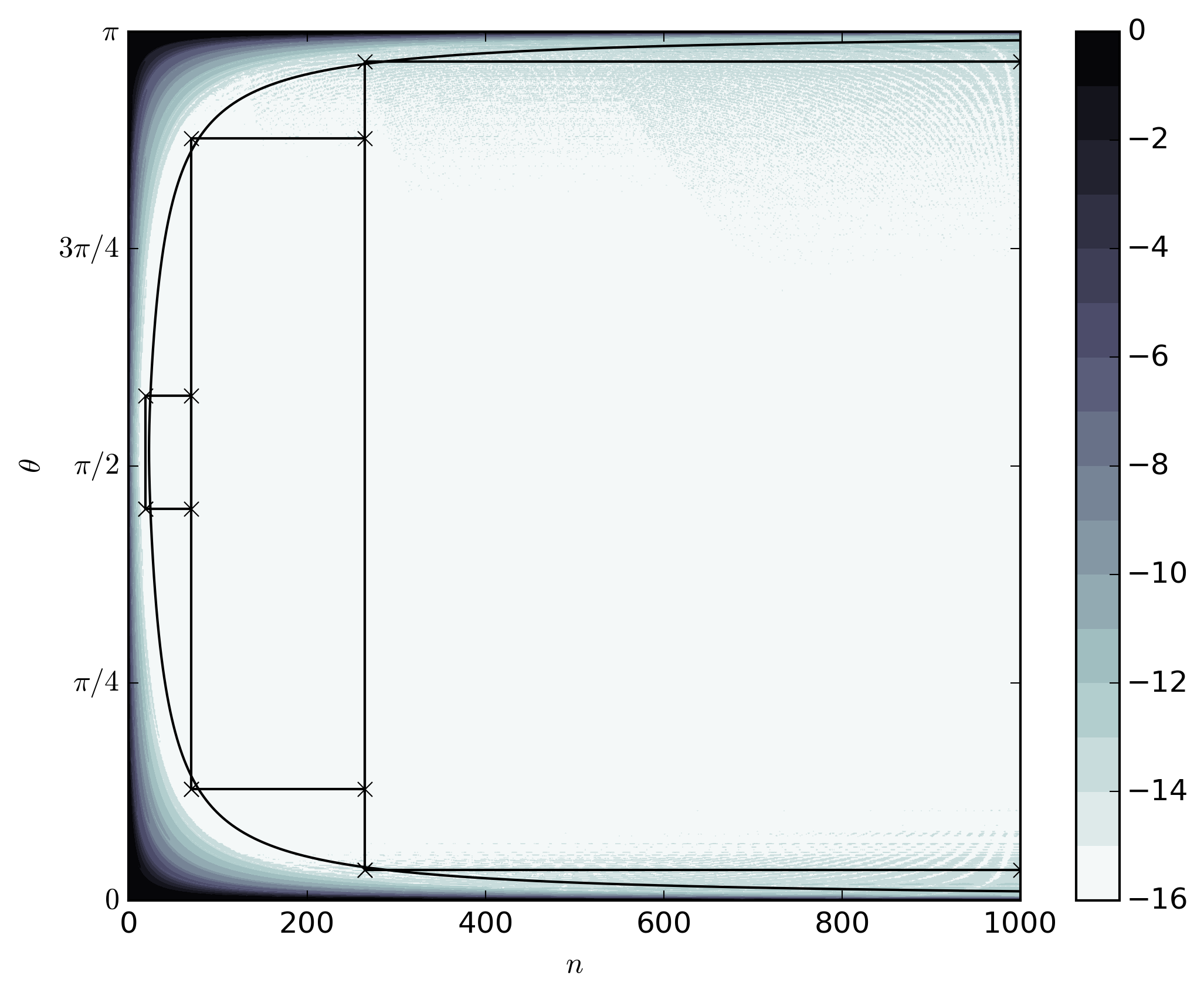}\\
\end{tabular}
\caption{The absolute error in $\PN{1/8,3/8}$ using the asymptotic formula~\eqref{eq:Jacasy} with $M = 7$, left, and $M=13$, right. In both plots: the colour denotes the absolute error on a logarithmic scale; the curves represent the approximate region of accuracy of the asymptotic formula to $\varepsilon\approx 2.2204\times10^{-16}$ determined by~\eqref{eq:asynthetapartition}; whereas, the boxes denote the numerically determined indices $i_k^1,i_k^2,j_k$ for $k=0,1,2$, left, and $k=0,1,2,3$, right, such that the remainder is certainly below $\varepsilon$.}
\label{fig:asypluspartitioning}
\end{center}
\end{figure}

\subsection{Error analysis for model coefficients}

Consider a set of coefficients satisfying $[\cj]_n = {\cal O}(n^{-r})$, for some $r\in\mathbb{R}$. We can estimate the sup-norm of the error in the forward transform~\eqref{eq:CJT} by estimating the error in applying the matrix $\PN{\alpha,\beta}$. Define $D_N^r := \diag(1,1^r,\ldots,N^r)$, then:
\begin{equation}\label{eq:PNerr}
\left\|\PN{\alpha,\beta}\cj\right\|_\infty \le \left\|\PN{\alpha,\beta}D_N^{-r}\right\|_\infty \|D_n^r\cj\|_\infty.
\end{equation}
Since:
\begin{equation}\label{eq:PNbnd}
\max_{x\in\xc} |P_n^{(\alpha,\beta)}(x)| = \max\{|P_n^{(\alpha,\beta)}(1)|,|P_n^{(\alpha,\beta)}(-1)|\} = \max\left\{\binom{n+\alpha}{n},\binom{n+\beta}{n}\right\} = \binom{n+\max\{\alpha,\beta\}}{n},
\end{equation}
we can estimate the first term in~\eqref{eq:PNerr} as follows:
\begin{align}
\left\|\PN{\alpha,\beta}D_N^{-r}\right\|_\infty & = {\cal O}\left(1+\sum_{n=1}^N\binom{n+\max\{\alpha,\beta\}}{n}n^{-r}\right),\quad{\rm as}\quad N\to\infty,\\
& = {\cal O}\left(H_{N,r-\max\{\alpha,\beta\}}\right),\quad{\rm as}\quad N\to\infty,
\end{align}
where $H_{N,r}$ are the generalized harmonic numbers~\citet{Graham-Knuth-Patashnik-89}. Using their asymptotics:
\begin{equation}
\left\|\PN{\alpha,\beta}\cj\right\|_\infty = \left\{\begin{array}{cc}
{\cal O}(N^{1+\max\{\alpha,\beta\}-r}), & r<1+\max\{\alpha,\beta\},\\
{\cal O}(\log N), & r = 1+\max\{\alpha,\beta\},\\
{\cal O}(1), & r > 1+\max\{\alpha,\beta\}.
\end{array}\right.
\end{equation}

\section{The inverse transform: Chebyshev to Jacobi}

It is impractical to compute the inverse transform~\eqref{eq:iCJT}:
\begin{equation*}
\cj = \PN{\alpha,\beta}^{-1} \TN \cc,
\end{equation*}
directly due to the occurrence of the inverse of the matrix $\PN{\alpha,\beta}$. Instead, following~\citet{Hale-Townsend-36-A148-14}, we use the transpose of the asymptotic formula~\eqref{eq:PNASY} in conjunction with the integral definition of the Jacobi coefficients:
\begin{equation}\label{eq:cjintdefn}
[\cj]_n = \dfrac{1}{\mathscr{A}_n^{\alpha,\beta}} \int_{-1}^1 p_N(x) P_n^{(\alpha,\beta)}(x)w^{(\alpha,\beta)}(x){\rm\,d}x,\qquad n=0,\ldots,N,
\end{equation}
where $p_N(x)$ is defined by~\eqref{eq:Tn}, and where $\mathscr{A}_n^{\alpha,\beta}$, defined by~\citep[\S 18.3.1]{Olver-et-al-NIST-10}, are the orthonormalization constants of the Jacobi polynomials:
\begin{equation}\label{eq:Anab}
\mathscr{A}_n^{\alpha,\beta} = \int_{-1}^1 P_n^{(\alpha,\beta)}(x)^2w^{(\alpha,\beta)}(x){\rm\,d}x = \dfrac{2^{\alpha+\beta+1}\Gamma(n+\alpha+1)\Gamma(n+\beta+1)}{(2n+\alpha+\beta+1)\Gamma(n+\alpha+\beta+1)n!}.
\end{equation}
Since $\deg(p_N(x))\le N$, its product with $P_N^{(\alpha,\beta)}(x)$ will be integrated exactly by the $2N+1$-point Clenshaw--Curtis quadrature rule with the Jacobi weight $w^{(\alpha,\beta)}(x)$.

\subsection{Clenshaw--Curtis quadrature}

Clenshaw--Curtis quadrature is a quadrature rule~(see~\citet{Waldvogel-43-001-03,Sommariva-65-682-13}) whose nodes are the $N+1$ Chebyshev--Lobatto points $\xc$. Given a continuous weight function $w(x)\in\CC(-1,1)$ and $\mathbb{P}_N$, the space of algebraic polynomials of degree at most $N$, the weight vector ${\bf w}_N$ is designed by the equality:
\begin{equation}
\int_{-1}^1 f(x)w(x){\rm\,d}x = {\bf w}_N^\top f(\xc),\quad\forall f\in\mathbb{P}_N.
\end{equation}
With the modified Chebyshev moments of the weight function $w(x)$:
\begin{equation}
\mu_n = \int_{-1}^1 T_n(x) w(x){\rm\,d}x,\qquad n=0,\ldots,N,
\end{equation}
the weights ${\bf w}_N$ can be determined via the formula:
\begin{equation}\label{eq:CCweights}
[{\bf w}_N]_n =
\dfrac{1-\tfrac{1}{2}(\delta_{0,n}+\delta_{N,n})}{N}\left\{\displaystyle \mu_0 + (-1)^n\mu_{N} + 2\sum_{k=1}^{N-1} \mu_k\cos[\pi kn/N]\right\}.
\end{equation}
Due to this representation, the ${\cal O}(N\log N)$ computation of the weights from modified Chebyshev moments is achieved via a diagonally scaled DCT-I.

For the Jacobi weight, the modified Chebyshev moments are known explicitly~(see~\citet{Piessens-87}):
\begin{equation}
\mu_n^{(\alpha,\beta)} = \int_{-1}^1 T_n(x) w^{(\alpha,\beta)}(x){\rm\,d}x = 2^{\alpha+\beta+1}{\rm B}(\alpha+1,\beta+1)\,_3F_2\left( 
\begin{array}{l}
n,-n,\alpha +1\\
\frac{1}{2}, \alpha+\beta+2
\end{array};
1\right),
\end{equation}
where $\,_3F_2$ is a generalized hypergeometric function~\citep[\S 16.2.1]{Olver-et-al-NIST-10}. Using Sister Celine's technique~\citep[\S 127]{Rainville-60} or induction~(see~\citet{Xiang-He-Wang-2014-10-14}), a recurrence relation can be derived for the modified moments:
\begin{align}
\mu_0^{(\alpha,\beta)} = 2^{\alpha+\beta+1}{\rm B}(\alpha+1,\beta+1),\quad \mu_1^{(\alpha,\beta)} = \dfrac{\alpha-\beta}{\alpha+\beta+2}\mu_0^{(\alpha,\beta)},&\\
(\alpha+\beta+n+2)\mu_{n+1}^{(\alpha,\beta)} + 2(\alpha-\beta) \mu_n^{(\alpha,\beta)} + (\alpha+\beta-n+2)\mu_{n-1}^{(\alpha,\beta)} = 0,&\quad{\rm for}\quad n>0.
\end{align}
It is known that for $\alpha>\beta$ and $\alpha = -\tfrac{1}{2} + \mathbb{N}_0$ or for $\alpha<\beta$ and $\beta = -\tfrac{1}{2} + \mathbb{N}_0$, neither forward nor backward recurrence is stable. This has been addressed by~\citet{Xiang-He-Wang-2014-10-14} by transforming the initial value problem into a boundary value problem with a sufficiently accurate asymptotic expansion for $\mu_N^{(\alpha,\beta)}$ and subsequent use of Oliver's algorithm~(see~\citet{Oliver-11-349-68}), i.e.~the LU decomposition of a tridiagonal matrix. However, the recurrence relation is stable in the forward direction in the half-open square $(\alpha,\beta)\in(-\tfrac{1}{2},\tfrac{1}{2}]^2$, and in light of the linear complexity of integer-valued decrements, Oliver's algorithm is not required in the present context. Once the modified Chebyshev moments $\mu_n^{(\alpha,\beta)}$ are computed, the Clenshaw--Curtis weights $\wj{\alpha,\beta}$ follow via a diagonally scaled DCT-I.

\subsection{The transpose of the asymptotic formula}

Since $\deg(p_N(x)P_n^{(\alpha,\beta)}(x))\le 2N$, the $2N+1$-point Clenshaw--Curtis quadrature rule yields:
\begin{equation}%\sum_{k=0}^{2N} [\wjt{\alpha,\beta}]_k p_N([\xct]_k)P_n^{(\alpha,\beta)}([\xct]_k)
[\cj]_n = \dfrac{1}{\mathscr{A}_n^{\alpha,\beta}} (\wjt{\alpha,\beta})^\top (p_N(\xct) P_n^{(\alpha,\beta)}(\xct)),\quad n=0,\ldots,N,
\end{equation}
where $p_N(x)$ is defined by~\eqref{eq:Tn}, and where $\mathscr{A}_n^{\alpha,\beta}$ is given by~\eqref{eq:Anab}. If we let the vector $[{\bf s}_{2N}]_n = (\mathscr{A}_n^{\alpha,\beta})^{-1}$ for $n=0,\ldots,2N$, then we can rewrite this in matrix form:
\begin{equation}\label{eq:CJTT}
\cj = \left[I_{N+1}\,|\,{\bf 0}_N\right] {\bf D}_{{\bf s}_{2N}} \PNt{\alpha,\beta}^\top {\bf D}_{\wjt{\alpha,\beta}} \TNt^\top \begin{bmatrix}I_{N+1}\\ {\bf 0}_N\end{bmatrix} \cc,
\end{equation}
where ${\bf D}_{{\bf s}_{2N}}$ and ${\bf D}_{\wjt{\alpha,\beta}}$ denote diagonal matrices whose entries correspond to ${\bf s}_{2N}$ and $\wjt{\alpha,\beta}$, respectively. Clenshaw--Curtis quadrature allows us to express the Jacobi coefficients in terms of transposed matrices rather than inverse matrices.

In order to complete our formulation, we use the transpose of~\eqref{eq:PNASY}, given by:
\begin{equation}
\PNt{\alpha,\beta}^{\rm ASY,\top} = \sum_{m=0}^{M-1} {\bf D}_{C_{n,m}^{\alpha,\beta}}\left( \TNt^\top {\bf D}_{u_m(\tct)} + \sin(\tct [0,\ldots,2N]^\top)^\top {\bf D}_{v_m(\tct)} \right) + {\bf R}_M^{\alpha,\beta}(\tct)^\top.
\end{equation}
Given the ordering of the points $\xct$, the transposed DCT-I $\TNt^\top$ and the transposed DST-I bordered by zeros $\sin(\tct[0,\ldots,2N]^\top)$ are symmetric, allowing for the same implementation as the forward Chebyshev--Jacobi transform. Similarly, the constants $n_M$, $\alpha_N$, $K$, and the indices $i_k^1$, $i_k^2$, and $j_k$ can be computed as in the forward transform, with the substitution $N\to 2N$. Therefore, the transpose of the asymptotic formula, combined with recurrence relations, can be used for a numerically stable partition and evaluation of the inverse transform.

\subsection{Error analysis for model coefficients}

Consider a set of coefficients satisfying $[\cc]_n = {\cal O}(n^{-r})$, for some $r\in\mathbb{R}$. We can estimate the sup-norm of the error in the inverse transform~\eqref{eq:iCJT} by estimating the error in the transpose formula~\eqref{eq:CJTT}. Using $D_N^r := \diag(1,1^r,\ldots,N^r)$ again, then:
\begin{align}\label{eq:PNTerr}
\left\|\cj\right\|_\infty & = \left\| \left[I_{N+1}\,|\,{\bf 0}_N\right] {\bf D}_{{\bf s}_{2N}} \PNt{\alpha,\beta}^\top {\bf D}_{\wjt{\alpha,\beta}}\TNt^\top \begin{bmatrix}I_{N+1}\\ {\bf 0}_N\end{bmatrix} \cc\right\|_\infty\\
& \le \left\| {\bf D}_{{\bf s}_{2N}} \PNt{\alpha,\beta}^\top\right\|_\infty \left\| {\bf D}_{\wjt{\alpha,\beta}}\right\|_\infty \left\| \TNt^\top\begin{bmatrix}D_N^{-r}\\ {\bf 0}_N\end{bmatrix} \right\|_\infty \|D_N^r\cc\|_\infty.
\end{align}
Using the bound on the Jacobi polynomials~\eqref{eq:PNbnd}, we can formulate asymptotics of the sup-norm involving the transposed matrix $\PNt{\alpha,\beta}^\top$ and its diagonal scaling. Since the inverse squares of the orthonormality constants are asymptotically ${\cal O}(N)$, as can be seen from~\eqref{eq:Anab}, we have:
\begin{align}
\left\| {\bf D}_{{\bf s}_{2N}} \PNt{\alpha,\beta}^\top\right\|_\infty & \le C\left\| D_{2N}^1 \PNt{\alpha,\beta}^\top\right\|_\infty,\quad\textrm{for some }C>0,\\
& \le C\max_{n\in\{0,\ldots,2N\}} \sum_{x\in \xct}(\delta_{n,0}+n)|P_n^{(\alpha,\beta)}(x)|,\\
& \le C\max_{n\in\{0,\ldots,2N\}} 2N (\delta_{n,0}+n)\binom{n+\max\{\alpha,\beta\}}{n},\\
& = {\cal O}\left( N^{2+\max\{\alpha,\beta\}}\right),\quad{\rm as}\quad N\to\infty.
\end{align}
For the Clenshaw--Curtis quadrature weights:
\begin{equation}
\left\| {\bf D}_{\wjt{\alpha,\beta}}\right\|_\infty = {\cal O}(N^{-1}),\quad{\rm as}\quad N\to\infty,
\end{equation}
as can be seen by~\eqref{eq:CCweights}\footnote{Intuitively, since the Clenshaw--Curtis weights must sum to a constant, and not one weight is of paramount importance, this can only occur if they all decay uniformly with ${\cal O}(N^{-1})$.}. Due to the symmetry of $\TNt$, we can conclude:
\begin{equation}
\left\| \TNt^\top\begin{bmatrix}D_N^{-r}\\ {\bf 0}_N\end{bmatrix} \right\|_\infty = 1+H_{N,r},
\end{equation}
or:
\begin{equation}
\left\|\cj\right\|_\infty = \left\{\begin{array}{cc}
{\cal O}(N^{2+\max\{\alpha,\beta\}-r}), & r<1,\\
{\cal O}(N^{1+\max\{\alpha,\beta\}}\log N), & r = 1,\\
{\cal O}(N^{1+\max\{\alpha,\beta\}}), & r > 1.
\end{array}\right.
\end{equation}

This growth rate appears larger than our numerical experiments suggest, and this can be attributed to the overestimation of $\left\| {\bf D}_{{\bf s}_{2N}} \PNt{\alpha,\beta}^\top\right\|_\infty$: the Jacobi polynomials are significantly smaller than the maximum of their endpoints for the majority of the interior of $[-1,1]$. However, without a useful envelope function, we report what can only be an overestimate.

\section{Design and implementation}

As~\citet{Hale-Townsend-36-A148-14} remark, the partitioning implies the algorithm is trivially parallelized. However, of more immediate concern is the application of the same transform to multiple sets of expansion coefficients. Analogous to Fastest Fourier Transform in the West (FFTW) of~\citet{Frigo-Johnson-93-216-05}, we divide the computation into part 1.~Planification and part 2.~Execution:
\begin{enumerate}
\item Planification
\begin{enumerate}
\item Allocation of temporary arrays;
\item Computation of the partitioning indices;
\item Computation of the recurrence coefficients;
\item Planification of the in-place DCT-I and DST-I;
\item Computation of the modified weights and orthonormality constants (inverse only);
\end{enumerate}
\item Execution
\begin{enumerate}
\item Computation of the diagonal matrices ${\bf D}_{u_m(\tc)}$, ${\bf D}_{v_m(\tc)}$, and ${\bf D}_{C_{n,m}^{\alpha,\beta}}$;
\item Application of the DCT-I and DST-I; and,
\item Execution of the recurrence relations.
\end{enumerate}
\end{enumerate}

Since part 1 is only dependent on the degree and the Jacobi parameters, it is reusable. Therefore, results of part 1 are stored in an object called a {\tt ChebyshevJacobiPlan}. Analogous to FFTW, applying the {\tt ChebyshevJacobiPlan} to a vector results in execution of part 2. While it is beneficial to divide the computation like so, the construction of a {\tt ChebyshevJacobiPlan} is not orders of magnitude larger than the execution, as is the case for other schemes using hierarchical or other complex data structures; our numerical experiments suggest an approximate gain on the order of $10\%$. However, the reduction of memory allocation alone could be important in memory-sensitive applications.

\subsection{Computational issues}

Consider the Stirling series for the gamma function~\citep[\S 5.11.10]{Olver-et-al-NIST-10} on $z\in\mathbb{R}^+$:
\begin{equation}
\Gamma(z) = \sqrt{2\pi}z^{z-\frac{1}{2}} e^{-z}\left(S_N(z)+R_N(z)\right),\qquad S_N(z) = \sum_{n=0}^{N-1} \dfrac{a_n}{z^n},\quad R_N(z) \le \dfrac{(1+\zeta(N))\Gamma(N)}{(2\pi)^{N+1}z^N}.
\end{equation}
The sequence $\{a_n\}_{n\ge0}$ is defined by the ratio of sequences A001163 and A001164 of~\citet{Sloane-OEIS}, and $\zeta$ is the Riemann zeta function~\citep[\S 25]{Olver-et-al-NIST-10}. Table~\ref{table:Stirling} shows the necessary and sufficient number of terms required of the Stirling series such that $\tfrac{R_N(z)}{S_N(z)} < \tfrac{\varepsilon}{20} \approx 1.1102\times10^{-17}$. Taking rounding errors into account, the effect is a relative error below machine precision $\varepsilon \approx 2.2204\times10^{-16}$ in double precision arithmetic.
\begin{table}[htbp]
\begin{center}
\caption{Number of terms such that $\tfrac{R_N(z)}{S_N(z)} < \tfrac{\varepsilon}{20} \approx 1.1102\times10^{-17}$ in double precision arithmetic.}
\label{table:Stirling}
%\begin{tabular*}{\hsize}{@{\extracolsep{\fill}}c|cccccccc}
\begin{tabular}{c|ccccccc}
\hline
$z\ge$ & 3275 & 591 & 196 & 92 & 53 & 35 & 26\\
$N$ & 4 & 5 & 6 & 7 & 8 & 9 & 10\\
\hline
$z\ge$ & 20 & 17 & 14 & 12 & 11 & 10 & 9\\
$N$ & 11 & 12 & 13 & 14 & 15 & 16 & 17\\
%$z\ge$ & 4350 & 730 & 165 & 100 & 38 & 17 & 12 & 10\\
%$n$ & 4 & 5 & 6 & 7 & 8 & 10 & 12 & 14\\
\hline
\end{tabular}
%\end{tabular*}
\end{center}
\end{table}%

Define $S_{\varepsilon}(z): [9,\infty)\to\mathbb{R}$ by the truncated Stirling series $S_N(z)$ with necessary and sufficient $N$ for relative error below $\varepsilon$, as determined by Table~\ref{table:Stirling}.

The coefficients $C_{n,m}^{\alpha,\beta}$ of~\eqref{eq:Cnmab} can be stably computed by forward recurrence in $n$ and $m$:
\begin{equation}
C_{n,m}^{\alpha,\beta} = \dfrac{(n+\alpha)(n+\beta)C_{n-1,m}^{\alpha,\beta}}{(n+\frac{\alpha+\beta+m+1}{2})(n+\frac{\alpha+\beta+m}{2})},\quad{\rm and}\quad
C_{n,m}^{\alpha,\beta} = \dfrac{C_{n,m-1}^{\alpha,\beta}}{2(2n+\alpha+\beta+m+1)}.\label{eq:Cnmabrec}
\end{equation}
However, to determine the indices $i_k^1$ and $i_k^2$ for the partitioning of the matrix $\PN{\alpha,\beta}$, use of an asymptotic formula is more efficient. Here, we adapt the approach of~\citep[\S 3.3.1]{Hale-Townsend-35-A652-13} with suitable modifications. In terms of $S_\varepsilon(z)$ defined above, the coefficients $C_{n,m}^{\alpha,\beta}$ can be expressed as:
\begin{align}
C_{n,m}^{\alpha,\beta} %& = \frac{2^{2n-m+\alpha+\beta+1} \Gamma(n+\alpha+1)\Gamma(n+\beta+1)}{\pi\Gamma(2n+m+\alpha+\beta+2)},\\
& = \frac{2^{2n-m+\alpha+\beta+1}\sqrt{2\pi} (n+\alpha+1)^{n+\alpha+\frac{1}{2}} e^{-n-\alpha-1} (n+\beta+1)^{n+\beta+\frac{1}{2}}e^{-n-\beta-1}}{\pi (2n+m+\alpha+\beta+2)^{2n+m+\alpha+\beta+\frac{3}{2}} e^{-2n-m-\alpha-\beta-2}}\nonumber\\
& \quad\times\dfrac{S_\varepsilon(n+\alpha+1)S_\varepsilon(n+\beta+1)}{S_\varepsilon(2n+m+\alpha+\beta+2)},\\
%& = \frac{2^{2n-m+\alpha+\beta+\frac{3}{2}}e^{m} (n+\alpha+1)^{n+\alpha+\frac{1}{2}}(n+\beta+1)^{n+\beta+\frac{1}{2}}}{\sqrt{\pi} (2n+m+\alpha+\beta+2)^{2n+m+\alpha+\beta+\frac{3}{2}}}\dfrac{S(n+\alpha+1)S(n+\beta+1)}{S(2n+m+\alpha+\beta+2)},\\
%& = \frac{e^m}{4^m\sqrt{\pi}} \frac{(n+\alpha+1)^{n+\alpha+\frac{1}{2}}(n+\beta+1)^{n+\beta+\frac{1}{2}}}{(n+\frac{m+\alpha+\beta+2}{2})^{2n+m+\alpha+\beta+\frac{3}{2}}}\dfrac{S(n+\alpha+1)S(n+\beta+1)}{S(2n+m+\alpha+\beta+2)},\\
& = \frac{e^m}{4^m\sqrt{\pi}} \left(1+\frac{\alpha-\beta-m}{2n+\alpha+\beta+m+2}\right)^{n+\alpha+\frac{1}{2}} \left(1+\frac{\beta-\alpha-m}{2n+\alpha+\beta+m+2}\right)^{n+\beta+\frac{1}{2}}\nonumber\\
& \quad\times \frac{1}{n^{m+\frac{1}{2}}(1+\frac{\alpha+\beta+m+2}{2n})^{m+\frac{1}{2}}} \dfrac{S_\varepsilon(n+\alpha+1)S_\varepsilon(n+\beta+1)}{S_\varepsilon(2n+m+\alpha+\beta+2)}.\label{eq:Cnmabasy}
\end{align}
In~\eqref{eq:Cnmabasy}, the terms resembling $(1+x)^y$ can be computed stably and efficiently by $\exp(y\logop x)$, where $\logop$ calls the natural logarithm $\log(1+x)$ for large arguments and its Taylor series for small arguments. So long as $n+\min\{\alpha,\beta\}\ge8$, the asymptotic formula~\eqref{eq:Cnmabasy} for the coefficients $C_{n,m}^{\alpha,\beta}$ can be used for a fast and stable numerical evaluation, and the downward recurrence of~\eqref{eq:Cnmabrec} supplies $C_{n,m}^{\alpha,\beta}$ for the handful of remaining values.

To compute the orthonormality constants $\mathscr{A}_n^{\alpha,\beta}$ of~\eqref{eq:Anab}, the asymptotic expansion derived by~\citet{Buhring-24-505-00} can be used. However, a remainder estimate is not reported and instead we use the same technique as for the computation of the coefficients $C_{n,m}^{\alpha,\beta}$:
\begin{align}
%\mathscr{A}_n^{\alpha,\beta} & = \dfrac{2^{\alpha+\beta+1}\Gamma(n+\alpha+1)\Gamma(n+\beta+1)}{(2n+\alpha+\beta+1)\Gamma(n+\alpha+\beta+1)\Gamma(n+1)},\\
\mathscr{A}_n^{\alpha,\beta} & = \dfrac{2^{\alpha+\beta+1}}{2n+\alpha+\beta+1}\dfrac{(n+\alpha+1)^{n+\alpha+\frac{1}{2}}(n+\beta+1)^{n+\beta+\frac{1}{2}}}{(n+\alpha+\beta+1)^{n+\alpha+\beta+\frac{1}{2}}(n+1)^{n+\frac{1}{2}}}\dfrac{S_\varepsilon(n+\alpha+1)S_\varepsilon(n+\beta+1)}{S_\varepsilon(n+\alpha+\beta+1)S_\varepsilon(n+1)},\\
& = \dfrac{2^{\alpha+\beta+1}}{2n+\alpha+\beta+1}\left(1-\dfrac{\beta}{n+\alpha+\beta+1}\right)^{\frac{n}{2}+\alpha+\frac{1}{4}}\left(1-\dfrac{\alpha}{n+\alpha+\beta+1}\right)^{\frac{n}{2}+\beta+\frac{1}{4}}\nonumber\\
& \quad\times \left(1+\dfrac{\alpha}{n+1}\right)^{\frac{n}{2}+\frac{1}{4}}\left(1+\dfrac{\beta}{n+1}\right)^{\frac{n}{2}+\frac{1}{4}} \dfrac{S_\varepsilon(n+\alpha+1)S_\varepsilon(n+\beta+1)}{S_\varepsilon(n+\alpha+\beta+1)S_\varepsilon(n+1)}.\label{eq:Anabasy}
\end{align}
Similar to~\eqref{eq:Cnmabasy},~\eqref{eq:Anabasy} can be computed stably and efficiently for $n+\min\{\alpha,\beta,\alpha+\beta,0\}\ge8$. Note as well the symmetry in both~\eqref{eq:Cnmabasy} and~\eqref{eq:Anabasy} upon the substitution $\alpha\leftrightarrow\beta$.

\subsection{Reinsch's modification of forward orthogonal polynomial recurrence and the Clenshaw--Smith algorithm}

In order to evaluate $\PN{\alpha,\beta}^{\rm REC}$ and its transpose, recurrence relations are required. Here, we review recurrence relations for orthogonal polynomials and derive new relations for stabilized evaluation near the boundary of the interval of orthogonality for Jacobi polynomials.

Let an orthogonal polynomial sequence $\pi_n(x)$ be defined by the three-term recurrence relation~\citep[\S 18.9.1]{Olver-et-al-NIST-10}:
\begin{equation}\label{eq:OPrec}
\pi_{n+1}(x) = (A_nx+B_n)\pi_n(x) -C_n\pi_{n-1}(x),\qquad \pi_{-1}(x) = 0,\quad \pi_0(x) = 1.
\end{equation}
The Clenshaw--Smith algorithm writes the sum:
\begin{equation}
p_N(x) = \sum_{n=0}^N c_n \pi_n(x),
\end{equation}
via an inhomogeneous recurrence relation involving the adjoint of~\eqref{eq:OPrec} as follows:
\begin{algorithm}[\citet{Clenshaw-9-118-55,Smith-19-33-65}]~
\begin{enumerate}
\item Set:
\begin{equation}
u_{N+1}(x) = u_{N+2}(x) = 0.
\end{equation}
\item For $n=N,N-1,\ldots,0$:
\begin{equation}
u_n(x) = (A_nx+B_n)u_{n+1}(x) - C_{n+1}u_{n+2}(x) + c_n.
\end{equation}
\item Then:
\begin{equation}
p_N(x) = u_0(x).
\end{equation}
\end{enumerate}
\end{algorithm}

After Clenshaw's original error analysis, it was~\citet{Gentleman-12-160-69} who first drew attention to the susceptibility of larger rounding errors near the ends of the interval $[-1,1]$. In Gentleman's paper, Reinsch proposed (unpublished) a stabilizing modification, with the error analysis of the modification performed by~\citet{Oliver-20-379-77}.~\citet{Levrie-Piessens-74-85} derive Reinsch's modification of the Clenshaw--Smith algorithm for Legendre, ultraspherical, and Laguerre polynomials. They also derive Reinsch's modification to the forward orthogonal polynomial recurrence~\eqref{eq:OPrec} for Chebyshev, Legendre, ultraspherical, Jacobi, and Laguerre polynomials. Here, we review Reinsch's modification with more general notation than that of~\citet{Levrie-Piessens-74-85} and extend Reinsch's modified Clenshaw--Smith algorithm to Jacobi polynomials.

Formally, we define the ratio:
\begin{equation}\label{eq:rnfx0}
r_n^f(x) := \dfrac{\pi_{n+1}(x)}{\pi_n(x)},\quad{\rm for}\quad n\ge0,
\end{equation}
such that at the point $x_0$:
\begin{equation}\label{eq:rn}
r_n^f(x_0) = A_nx_0+B_n - C_nr_{n-1}^f(x_0)^{-1},
\end{equation}
or isolating for $B_n$:
\begin{equation}
B_n = r_n^f(x_0)+C_nr_{n-1}^f(x_0)^{-1}-A_nx_0.
\end{equation}
Substituting this relationship for $B_n$ into the forward recurrence~\eqref{eq:OPrec}, we obtain the modified version:
\begin{algorithm}~
\begin{enumerate}
\item Set:
\begin{equation}
\pi_0(x) =1,\qquad d_0(x) = 0.
\end{equation}
\item For $n\ge0$:
\begin{align}
d_{n+1}(x) & = \left( A_n(x-x_0)\pi_n(x) + C_nd_n(x) \right)r_n^f(x_0)^{-1},\\
\pi_{n+1}(x) & = \left(\pi_n(x) + d_{n+1}(x)\right)r_n^f(x_0).
\end{align}
\end{enumerate}
\end{algorithm}

Consider the homogeneous adjoint three-term recurrence:
\begin{equation}\label{eq:adjointvn}
v_n(x) = (A_nx+B_n)v_{n+1}(x) - C_{n+1}v_{n+2}(x),\qquad v_0(x) = 0,\quad v_1(x) = 1.
\end{equation}
Formally, we define the ratio:
\begin{equation}\label{eq:rnbx0}
r_n^b(x) := \dfrac{v_{n+1}(x)}{v_n(x)},\quad{\rm for}\quad n>0,
\end{equation}
such that at the point $x_0$:
\begin{equation}\label{eq:adjointrn}
r_n^b(x_0)^{-1} = A_nx_0+B_n - C_{n+1}r_{n+1}^b(x_0),
\end{equation}
or isolating for $B_n$:
\begin{equation}
B_n = r_n^b(x_0)^{-1} + C_{n+1}r_{n+1}^b(x_0) - A_nx_0.
\end{equation}
Substituting this relationship for $B_n$ into the Clenshaw--Smith algorithm, we obtain the modified version:
\begin{algorithm}~
\begin{enumerate}
\item Set:
\begin{equation}
u_{N+1}(x) = d_{N+1}(x) = 0.
\end{equation}
\item For $n=N,N-1,\ldots,1$:
\begin{align}
d_n(x) & = \left( A_n(x-x_0)u_{n+1}(x) + C_{n+1}d_{n+1}(x) + c_n \right)r_n^b(x_0),\\
u_n(x) & = \left(u_{n+1}(x) + d_n(x)\right)r_n^b(x_0)^{-1}.
\end{align}
\item Then:
\begin{equation}
p_N(x) = A_0(x-x_0)u_1(x) + C_1d_1(x) + c_0.
\end{equation}
\end{enumerate}
\end{algorithm}

The stability of the modified forward recurrence and the modified Clenshaw--Smith algorithm near $x_0$ is derived from the geometric damping induced by $x-x_0$ and the avoidance of cancellation errors. However, the na\"ive implementation of the two-term recurrence relations for the ratios~\eqref{eq:rn} and~\eqref{eq:adjointrn} contains precisely the cancellation errors we were hoping to avoid. Therefore, to complete the stable implementation of the scheme, we require stable evaluation of the ratios $r^f(x_0)$ and $r^b(x_0)$.

For Jacobi polynomials, due to~\eqref{eq:Pnendpts} and the two-term recurrence of binomials, the ratios $r_n^f(\pm1)$ defined by~\eqref{eq:rnfx0} are trivial:
\begin{equation}
r_n^f(1) = \dfrac{n+\alpha+1}{n+1},\quad{\rm and}\quad r_n^f(-1) = -\frac{n+\beta+1}{n+1}.
\end{equation}
Fortunately, we can also prove the following:
\begin{lemma}
For Jacobi polynomials, the ratios $r_n^b(\pm1)$ defined by~\eqref{eq:rnbx0} are:
\begin{equation}
r_n^b(1) = \dfrac{n+1}{n}\dfrac{(\alpha+\beta+n+1)(\alpha+\beta+2n)}{(n+\beta)(\alpha+\beta+2n+2)},\quad{\rm and}\quad r_n^b(-1) = -\dfrac{n+1}{n}\dfrac{(\alpha+\beta+n+1)(\alpha+\beta+2n)}{(n+\alpha)(\alpha+\beta+2n+2)}.
\end{equation}
\end{lemma}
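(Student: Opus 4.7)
The plan is to reduce the computation of $r_n^b(\pm 1)$ to known endpoint values of the Jacobi polynomials by observing that the homogeneous adjoint recurrence~\eqref{eq:adjointvn} is, up to a simple geometric rescaling, the same recurrence as~\eqref{eq:OPrec}. Concretely, I will show that
\begin{equation*}
v_{n+1}(x) = \dfrac{P_n^{(\alpha,\beta)}(x)}{\prod_{k=1}^{n} C_k},
\end{equation*}
from which the desired ratios follow immediately by substitution.

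First I would shift the index and let $w_n(x) = v_{n+1}(x)$, so that $w_{-1} = v_0 = 0$ and $w_0 = v_1 = 1$, matching the initial conditions of the Jacobi recurrence. Rearranging~\eqref{eq:adjointvn} gives $C_{n+1} w_{n+1}(x) = (A_n x + B_n) w_n(x) - w_{n-1}(x)$. Making the ansatz $w_n(x) = \mu_n P_n^{(\alpha,\beta)}(x)$ and comparing with the Jacobi three-term recurrence forces $\mu_{n+1} = \mu_n / C_{n+1}$ on the coefficient of $P_n$ and $\mu_{n+1} C_n = \mu_{n-1}/C_{n+1}$ on the coefficient of $P_{n-1}$; these are mutually consistent, and with $\mu_0 = 1$ yield $\mu_n = 1/\prod_{k=1}^n C_k$.

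Having this closed form, the ratio~\eqref{eq:rnbx0} telescopes:
\begin{equation*}
r_n^b(x) = \dfrac{v_{n+1}(x)}{v_n(x)} = \dfrac{1}{C_n}\dfrac{P_n^{(\alpha,\beta)}(x)}{P_{n-1}^{(\alpha,\beta)}(x)}.
\end{equation*}
Then I would evaluate at $x=\pm 1$ using the endpoint formulas~\eqref{eq:Pnendpts}, which give $P_n^{(\alpha,\beta)}(1)/P_{n-1}^{(\alpha,\beta)}(1) = (n+\alpha)/n$ via the binomial identity and, by symmetry~\eqref{eq:Pnsym}, $P_n^{(\alpha,\beta)}(-1)/P_{n-1}^{(\alpha,\beta)}(-1) = -(n+\beta)/n$. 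Plugging in the explicit expression for $C_n$ from the recurrence coefficients and simplifying (the factor $n+\alpha$ cancels at $x=1$, and $n+\beta$ cancels at $x=-1$) yields exactly the claimed formulas.

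The only mild obstacle is the consistency check of the ansatz at $n=0$, where $P_{-1}=0$ makes the coefficient of $P_{-1}$ vacuous; this is precisely why the initial condition $v_0 = 0$ poses no difficulty. The rest is routine algebraic simplification that I would not write out in full.
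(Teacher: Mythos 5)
Your proof is correct, and it takes a genuinely different route from the paper's. The paper's proof is a one-line verification: insert the claimed expressions into the ratio recurrence~\eqref{eq:adjointrn} and check that the identity holds for all $n$; strictly, to pin the solution down this also needs the base case coming from $v_0(x)=0$, namely $r_1^b(x_0)=(A_0x_0+B_0)/C_1$, which the paper leaves unstated. You instead solve the linear adjoint recurrence~\eqref{eq:adjointvn} in closed form: writing $w_n=v_{n+1}$ and rescaling by $\mu_n=1/\prod_{k=1}^n C_k$ shows that $w_n$ and $\mu_n P_n^{(\alpha,\beta)}$ satisfy the same recurrence with the same initial data (the condition $v_0=0$ being absorbed by $P_{-1}=0$, as you note), hence
\[
r_n^b(x)=\frac{v_{n+1}(x)}{v_n(x)}=\frac{1}{C_n}\,\frac{P_n^{(\alpha,\beta)}(x)}{P_{n-1}^{(\alpha,\beta)}(x)},\qquad n\ge1,
\]
and the lemma follows by evaluating at $x=\pm1$ with~\eqref{eq:Pnendpts} and~\eqref{eq:Pnsym}; the cancellations you describe (of $n+\alpha$ at $x=1$ and of $n+\beta$ at $x=-1$ against the factors in $C_n$) indeed reproduce exactly the stated formulas. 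As for what each approach buys: the paper's substitution is shorter but presumes the answer and hides the uniqueness/base-case step; your derivation explains where the formulas come from, makes the induction explicit, and establishes the stronger intermediate fact that $r_n^b(x)$ is an explicit ratio of Jacobi polynomials at \emph{every} $x$, which would be the natural starting point if one ever wanted Reinsch-type stabilization about an interior point $x_0\neq\pm1$, where the paper's guess-and-verify strategy offers no guidance.
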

\begin{proof}
One need only insert the ratios into the relationship~\eqref{eq:adjointrn}.
\end{proof}

Figure~\ref{fig:recurrence} shows the relative error in evaluating $P_{10,000}^{(0,0)}(\cos\theta)$ at $10,001$ equally spaced angles using the six described algorithms. In Figure~\ref{fig:recurrence}, the terms $x\pm1$ are computed accurately with the trigonometric identities $x+1=2\cos^2(\tfrac{\theta}{2})$ and $x-1 = -2\sin^2(\tfrac{\theta}{2})$. While variations in $\alpha$ and $\beta$ will change the accuracy of all six recurrence relations, practically, we take the unmodified algorithms to be more accurate in $\frac{\pi}{4} < \theta < \frac{3\pi}{4}$, and the modifications otherwise as the perturbations in the breakpoints are asymptotically of lower order as $N\to\infty$.

\begin{figure}[htpb]
\begin{center}
\begin{tabular}{cc}
\hspace*{-0.5cm}\includegraphics[width=0.5\textwidth]{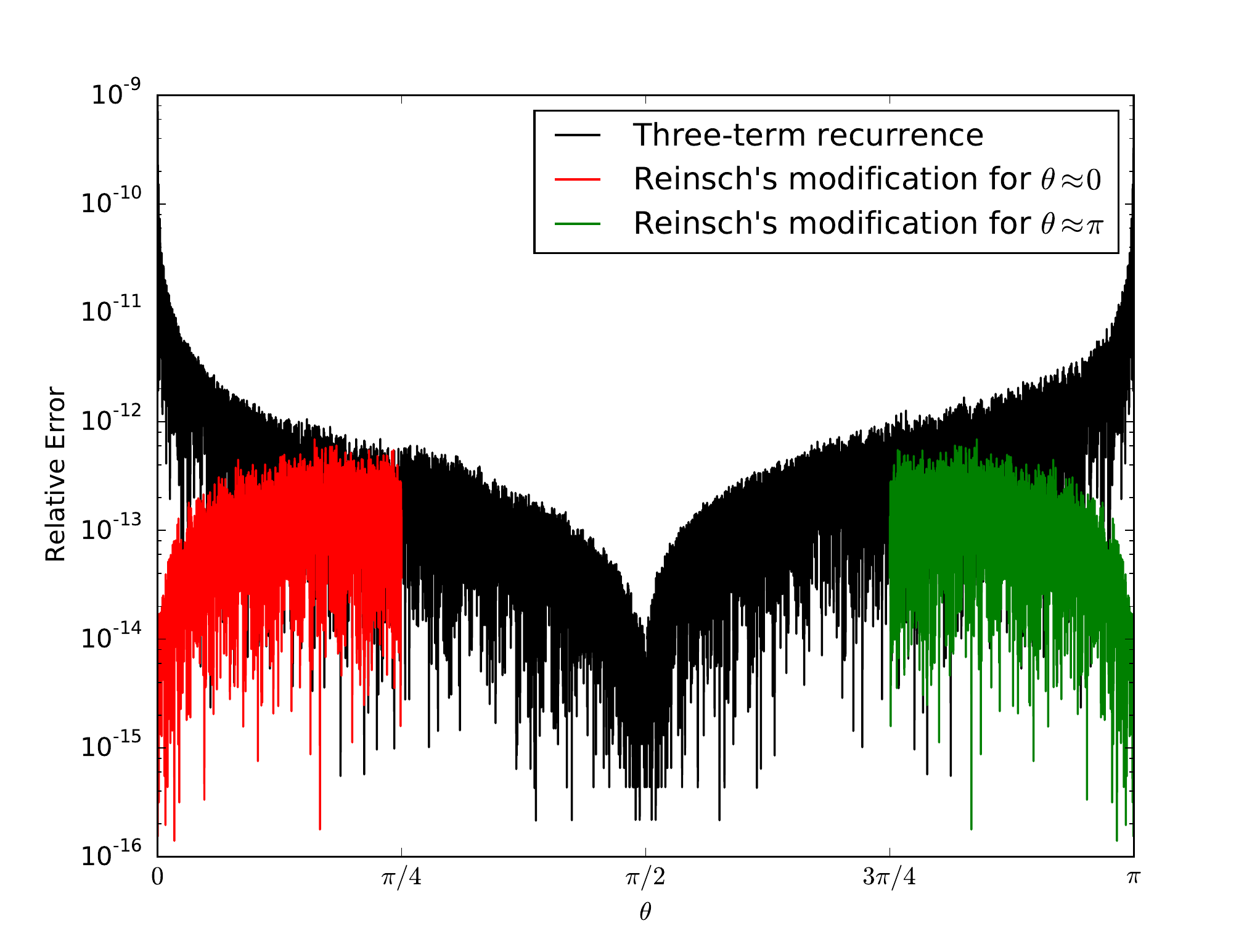}&
\hspace*{-0.5cm}\includegraphics[width=0.5\textwidth]{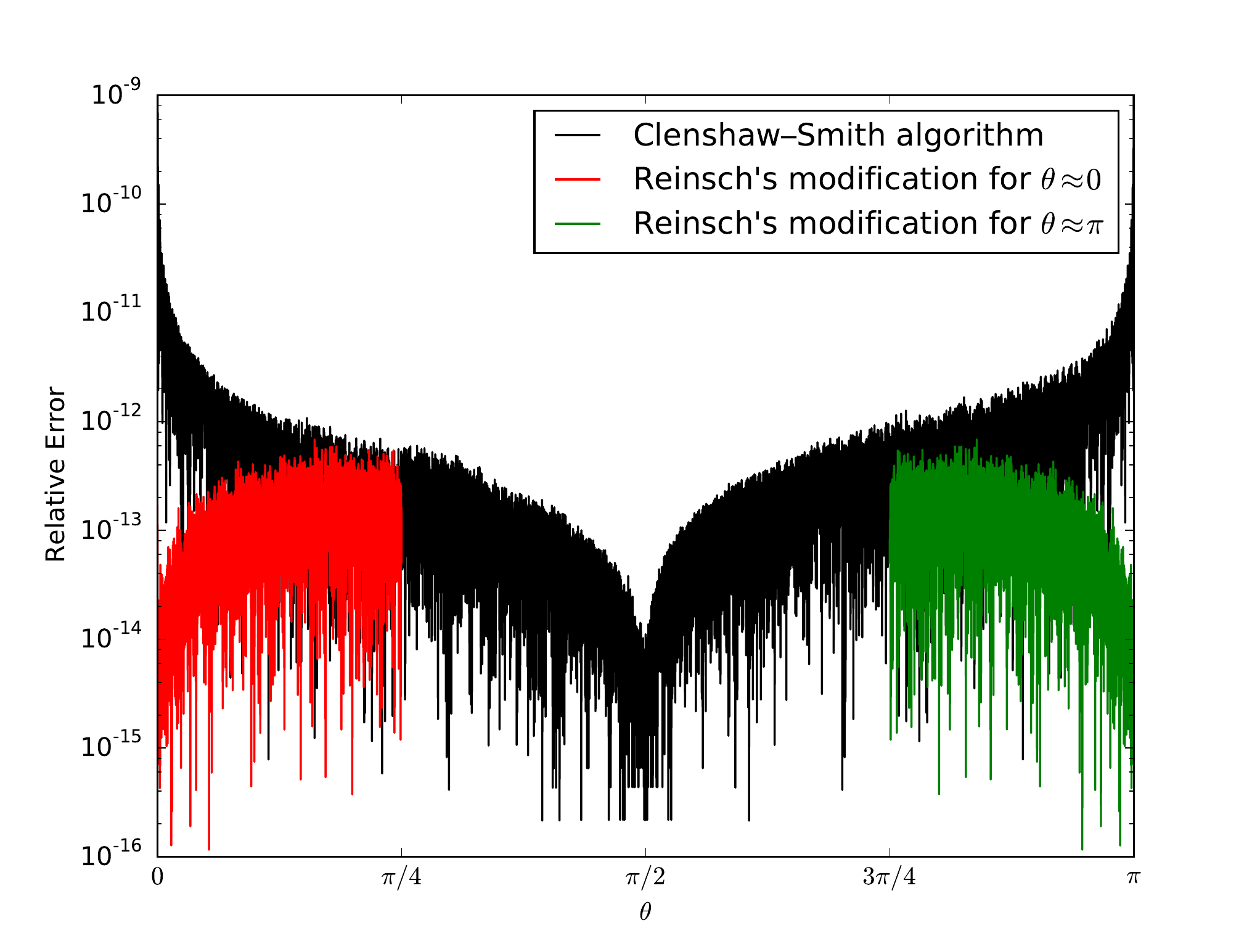}\\
\end{tabular}
\caption{The relative error in evaluating $P_{10,000}^{(0,0)}(\cos\theta)$ at $10,001$ equally spaced angles using: left, the three-term recurrence relation and Reinsch's modification for the ends of the interval $[0,\pi]$; right, the Clenshaw--Smith algorithm and Reinsch's modification for the ends of the interval $[0,\pi]$.}
\label{fig:recurrence}
\end{center}
\end{figure}

\section{Numerical discussion \& outlook}

In principle the connection coefficients are able to provide reference solutions for the maximum absolute error. But in practice, the na\"ive algorithm's quadratic complexity limits the applicability to below about $N=10^4$. Therefore, in Figure~\ref{fig:error}, we plot the maximum absolute error in transforming Chebyshev expansion coefficients to Jacobi expansion coefficients and back for coefficients simulating an irregular function and for coefficients simulating a continuous function. Error is similar for the forward--inverse composition. Figure~\ref{fig:timings} shows the execution time of the forward and inverse transforms in line with the predicted asymptotic complexity ${\cal O}(N\log^2N/\log\log N)$. Our implementation~\citep[\tt FastTransforms.jl]{Slevinsky-GitHub-FastTransforms} in the {\sc Julia} programming language is freely available online.

Composition of the forward and inverse transforms allows for the transform between expansions in Jacobi polynomials of differing parameters. As well, use of a Nonuniform Discrete Cosine Transform (NDCT) (e.g.~\citet{Hale-Townsend-16}) could allow for fast evaluation at the Gauss--Jacobi nodes. However, an efficient NDCT requires points to be close to the Chebyshev points of the first kind, and the inequalities on the zeros of the Jacobi polynomials~\citep[\S 18.16]{Olver-et-al-NIST-10} seem to be overestimates. The performance of an NDCT may be better in practice than can be currently estimated theoretically.

One potential area of application is the extension of the fast and well-conditioned spectral method for solving singular integral equations of~\citet{Slevinsky-Olver-15} to polygonal boundaries. Elliptic partial differential equations have angle-dependent algebraic singularities in the densities on polygonal boundaries. It is conjectured that working in the more exotic bases of Jacobi polynomials and Jacobi functions of the second kind can lead to banded representations of singular integral operators defined on polygonal boundaries.

Since the integer-valued increments are required for Jacobi parameters beyond $(\alpha,\beta)\in(-\tfrac{1}{2},\tfrac{1}{2}]^2$, the method proposed and analyzed here cannot be used for exceedingly large parameters. This is consistent with nonuniformity of Hahn's asymptotics~\eqref{eq:Jacasy} in $\alpha$ and $\beta$. Therefore, this Chebyshev--Jacobi transform cannot be used for a fast spherical harmonics transform. There are certain parameter r\'egimes where the complexity can be reduced. These are detailed in Appendix~\ref{appendix:EdgeCases}.

\begin{figure}[htpb]
\begin{center}
\begin{tabular}{cc}
\hspace*{-0.5cm}\includegraphics[width=0.5\textwidth]{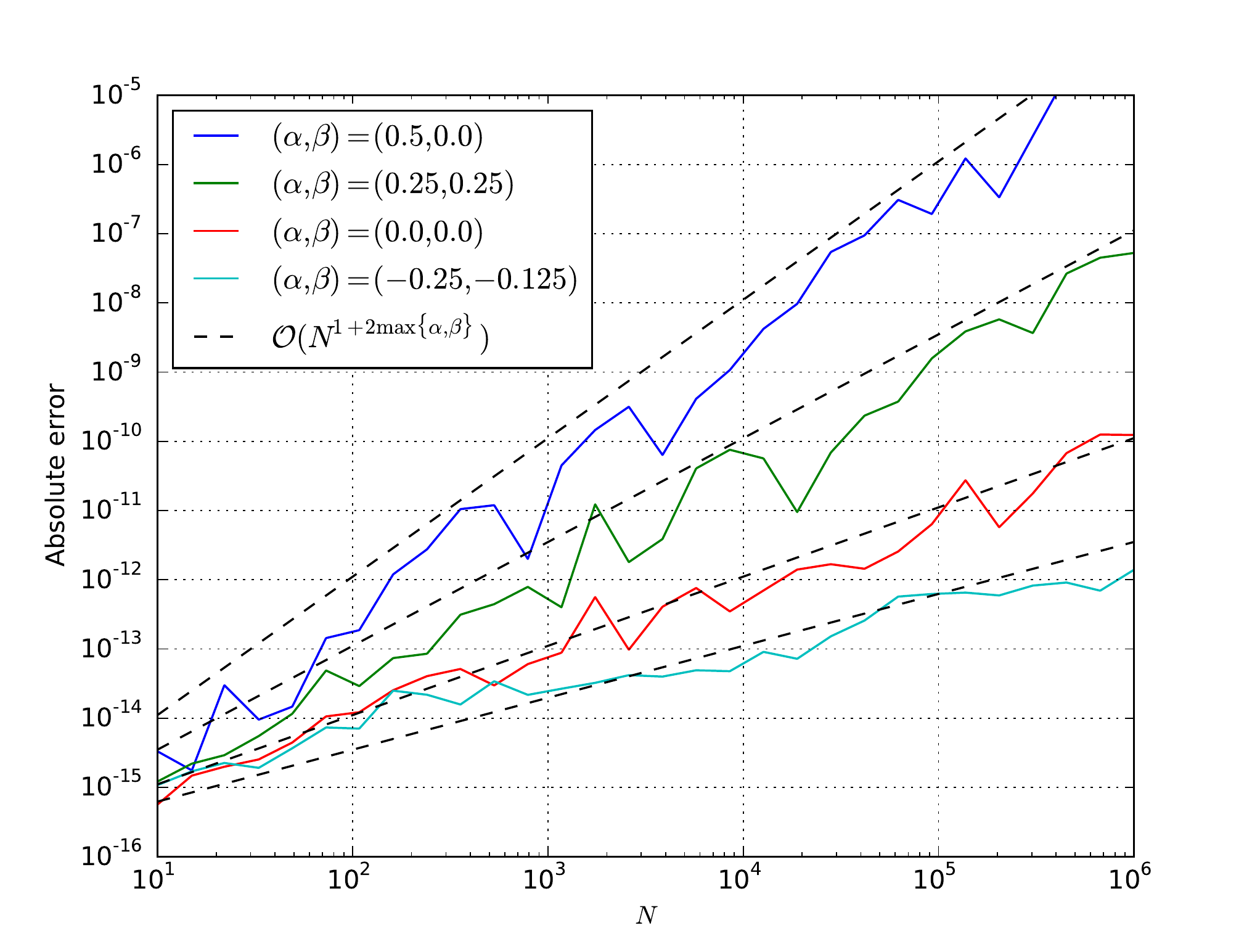}&
\hspace*{-0.5cm}\includegraphics[width=0.5\textwidth]{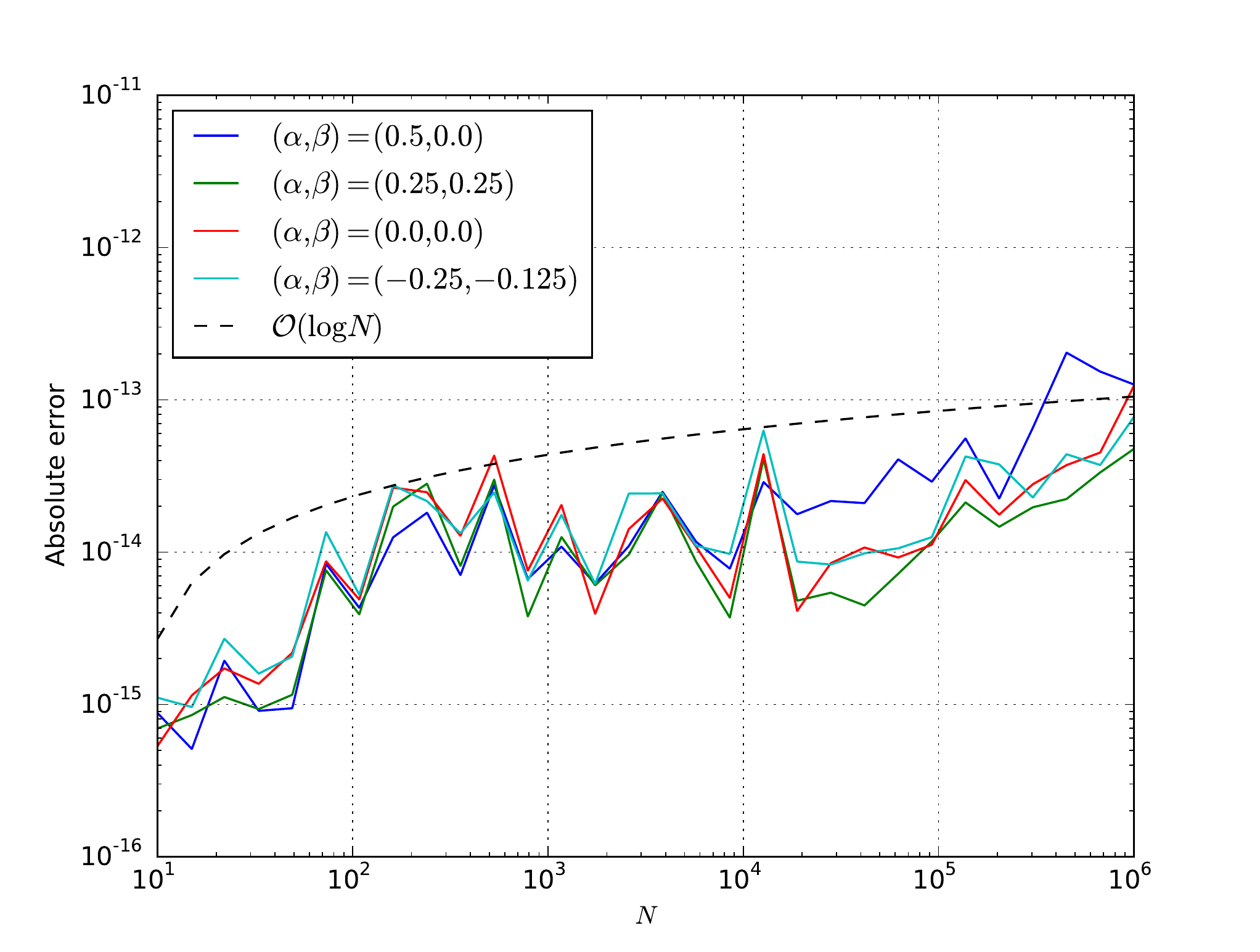}\\
\end{tabular}
\caption{Absolute error of the inverse Chebyshev--Jacobi transform of the forward Chebyshev--Jacobi transform. Left: for coefficients simulating an irregular function $[{\bf c}_N]_n \sim U(0,1)$. Right: for coefficients simulating a continuous function $[{\bf c}_N]_n \sim U(-1,1)n^{-2}$. In both plots, the numbers labeling the solid lines refer to different Jacobi parameters and the dashed black lines are asymptotic estimates on the error based on the error analyses. The results are an average over $10$ executions.}
\label{fig:error}
\end{center}
\end{figure}

\begin{figure}[htpb]
\begin{center}
\begin{tabular}{cc}
\hspace*{-0.5cm}\includegraphics[width=0.5\textwidth]{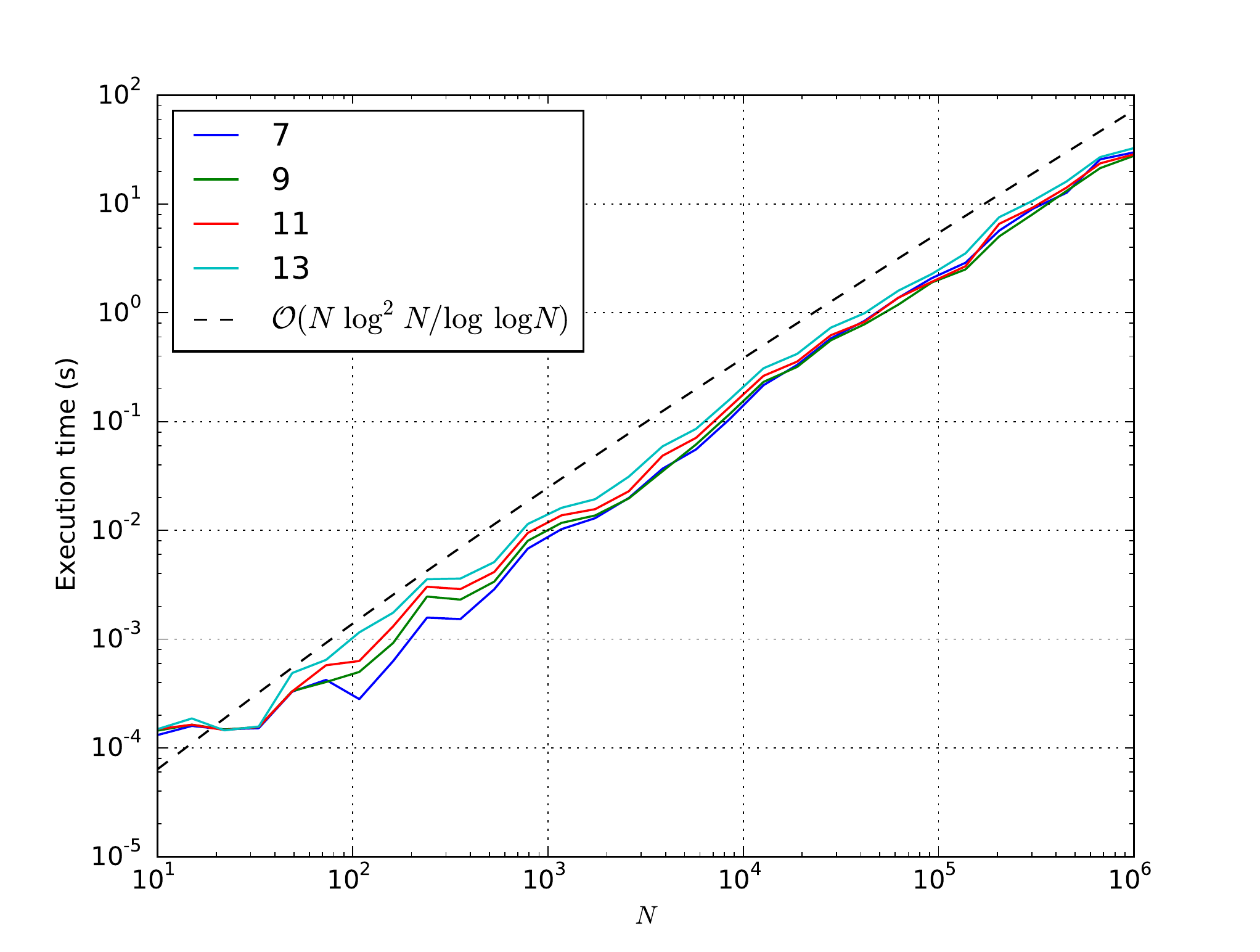}&
\hspace*{-0.5cm}\includegraphics[width=0.5\textwidth]{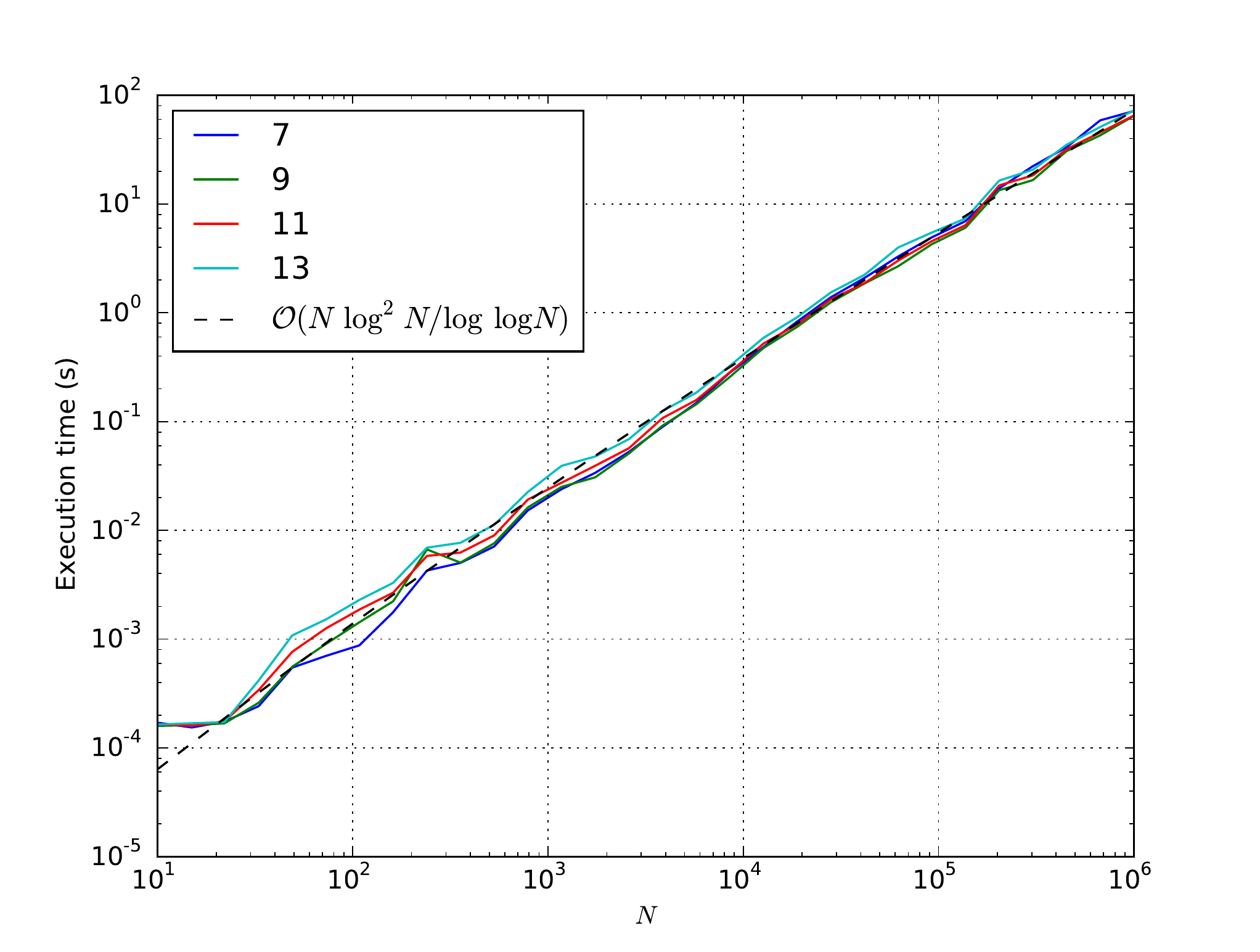}\\
\end{tabular}
\caption{Execution time for the Chebyshev--Jacobi transform. Left: the forward Chebyshev--Jacobi transform. Right: the inverse Chebyshev--Jacobi transform. In both plots, the numbers labeling the solid lines refer to the parameter $M$ and the dashed black line is the same, showing that the inverse transform takes about twice the time. The results are an average over $10$ executions performed with uniformly distributed parameters $\alpha\sim U(-\tfrac{1}{2},\tfrac{1}{2})$ and $\beta\sim U(-\tfrac{1}{2},\tfrac{1}{2})$.}
\label{fig:timings}
\end{center}
\end{figure}

\section*{Acknowledgments}

This paper is dedicated to the celebration of Nick Trefethen on his $60^{\rm th}$ birthday and his inspirational contributions to numerical analysis. I also acknowledge the generous support of the Natural Sciences and Engineering Research Council of Canada.%We wish to thank the referees for their constructive remarks.

\bibliography{/Users/Mikael/Bibliography/Mik}

\appendix

\section{Complexity of $\PN{\alpha,\beta}^{\rm REC}$}\label{appendix:complexity}

In this section, we derive refined estimates on the complexity of applying the matrix $\PN{\alpha,\beta}^{\rm REC}$. By artificially partitioning the matrix into rectangular regions, we need to estimate~\citep[\S 3.3]{Hale-Townsend-36-A148-14}:
\begin{equation}
\sum_{k=1}^{K-1}\alpha_N^k N (i_{k+1}^1-i_k^1 + i_k^2-i_{k+1}^2),
\end{equation}
to leading order. Fortunately, as $\theta\to0$ or $\theta\to\pi$, $g_M(\theta)$ is its own asymptotic expansion. For brevity, we derive the leading order asymptotics of $i_k^1$, and deduce those of $i_k^2$ by symmetry. To leading order:
\begin{equation}
g_M(\theta) \sim g_M^1(\theta) = \dfrac{(\frac{1}{2}+\alpha)_M(\frac{1}{2}-\alpha)_M}{M!}\dfrac{1}{\sin^{M+\alpha+\frac{1}{2}}\frac{\theta}{2}},\quad{\rm as}\quad\theta\to0.
\end{equation}
Then, to determine the leading order estimate of $i_k^1$:
\begin{equation}
\varepsilon \sim 2C_{j_k,M}^{\alpha,\beta}g_M^1\left(\dfrac{i_k^1\pi}{N+1}\right),
\end{equation}
or isolating for $i_k^1$:
\begin{equation}
i_k^1 \sim \left\lfloor \dfrac{2(N+1)}{\pi}\sin^{-1}\left(\left(\dfrac{(\frac{1}{2}+\alpha)_M(\frac{1}{2}-\alpha)_M}{\varepsilon M! 2^{2M-1}\sqrt{\pi}j_k^{M+\frac{1}{2}}}\right)^{\frac{1}{M+\alpha+\frac{1}{2}}}\right) \right\rfloor,\quad{\rm as}\quad N\to\infty.
\end{equation}
Using the fact that $\sin^{-1}x \sim x$ as $x\to0$, we find:
\begin{equation}
i_k^1 = {\cal O}\left(N\times j_k^{-\frac{M+\frac{1}{2}}{M+\alpha+\frac{1}{2}}}\right) = {\cal O}\left(N^{\frac{\alpha}{M+\alpha+\frac{1}{2}}}\times \alpha_N^{-k\frac{M+\frac{1}{2}}{M+\alpha+\frac{1}{2}}}\right),\quad{\rm as}\quad N\to\infty.
\end{equation}
Therefore, the sum involving $i_{k+1}^1$ and $i_k^1$ is, to leading order:
\begin{align}
\sum_{k=1}^{K-1} \alpha_N^k N (i_{k+1}^1-i_k^1) & = {\cal O}\left(N^{\frac{M+2\alpha+\frac{1}{2}}{M+\alpha+\frac{1}{2}}}\sum_{k=1}^{K-1} \left(\alpha_N^{-\frac{M+\frac{1}{2}}{M+\alpha+\frac{1}{2}}}-1\right)\alpha_N^{k\frac{\alpha}{M+\alpha+\frac{1}{2}}}\right),\\
& = {\cal O}\left(\dfrac{K N^{\frac{M+2\alpha+\frac{1}{2}}{M+\alpha+\frac{1}{2}}}}{\alpha_N^{\frac{M-\alpha+\frac{1}{2}}{M+\alpha+\frac{1}{2}}}}\right),\quad{\rm as}\quad N\to\infty.
\end{align}
By the symmetry in $\alpha\leftrightarrow\beta$ and $\theta\leftrightarrow\pi-\theta$, we have:
\begin{equation}
\sum_{k=1}^{K-1} \alpha_N^k N (i_k^2-i_{k+1}^2) = {\cal O}\left(\dfrac{K N^{\frac{M+2\beta+\frac{1}{2}}{M+\beta+\frac{1}{2}}}}{\alpha_N^{\frac{M-\beta+\frac{1}{2}}{M+\beta+\frac{1}{2}}}}\right),\quad{\rm as}\quad N\to\infty.
\end{equation}
Therefore, the simplified estimate ${\cal O}(N\log^2 N/\log\log N)$ is a local expansion near $(\alpha,\beta)\approx(0,0)$, and we observe in Figure~\ref{fig:timings} that it holds over $(\alpha,\beta)\in(-\tfrac{1}{2},\tfrac{1}{2}]^2$ in practice, so long as $M\ge5$.

\section{Jacobi parameters resulting in reduced complexity}\label{appendix:EdgeCases}

\subsection{$\alpha=\beta=\lambda-\frac{1}{2}$}

In the case that $\alpha=\beta=\lambda-\tfrac{1}{2}$, we are a normalization away from the ultraspherical or Gegenbauer polynomials. These asymptotics are given by~\citep[\S 18.15]{Olver-et-al-NIST-10}:
\begin{equation}
P_n^{(\lambda-\frac{1}{2},\lambda-\frac{1}{2})}(\cos\theta) = \sum_{m=0}^{M-1} C_{n,m}^\lambda \dfrac{\cos\theta_{n,m}^\lambda}{\sin^{m+\lambda}\theta} + R_{n,M}^\lambda(\theta).
\end{equation}
Here, we have:
\begin{align}
C_{n,m}^\lambda & = \dfrac{2^\lambda \Gamma(n+\lambda+\tfrac{1}{2})}{\sqrt{\pi} \Gamma(n+\lambda+1)} \dfrac{(\lambda)_m(1-\lambda)_m}{2^m m! (n+\lambda+1)_m},\\
\theta_{n,m}^\lambda & = (n+m+\lambda)\theta - (m+\lambda)\tfrac{\pi}{2} = n\theta - (m+\lambda)(\tfrac{\pi}{2}-\theta).
\end{align}
and $x=\cos\theta$. The coefficients $C_{n,m}^\lambda$ can be computed by the recurrence:
\begin{equation}
C_{n,m}^\lambda = \dfrac{(\lambda+m-1)(m-\lambda)}{2m(n+\lambda+m)}C_{n,m-1}^\lambda,\qquad C_{n,0}^\lambda = \dfrac{2^\lambda \Gamma(n+\lambda+\tfrac{1}{2})}{\sqrt{\pi} \Gamma(n+\lambda+1)}.
\end{equation}
So long as $\lambda\in(0,1)$, the error is bounded by twice the magnitude of the first neglected term in the summation:
\begin{equation}
|R_{n,M}^\lambda(\theta)| < \dfrac{2C_{n,M}^\lambda}{\sin^{M+\lambda}\theta}.
\end{equation}
Therefore, if we set the error to $\varepsilon$, this will define a curve in the $n$-$\theta$ plane for every $M$ and $\lambda$ given by:
\begin{equation}
n \approx \dfrac{n_M^\lambda}{\sin^{\frac{M+\lambda}{M+\frac{1}{2}}}\theta},\qquad n_M^\lambda = \left\lfloor\left( \dfrac{\varepsilon \sqrt{\pi} 2^M M!}{2^{\lambda+1}(\lambda)_M(1-\lambda)_M}\right)^{-\frac{1}{M+\frac{1}{2}}}\right\rfloor.
\end{equation}

\subsection{$\alpha=\frac{1}{2}$}

If $\alpha = \tfrac{1}{2}$, then the summations in the functions $f_m(\theta)$ collapse:
\begin{equation}
f_m(\theta) = \dfrac{(\tfrac{1}{2}+\beta)_{m}(\tfrac{1}{2}-\beta)_{m}}{m!}\dfrac{\cos\theta_{n,m,0}}{\sin\left(\frac{\theta}{2}\right)\cos^{m+\beta+\frac{1}{2}}\left(\frac{\theta}{2}\right)}.
\end{equation}

\subsection{$\beta=\frac{1}{2}$}

If $\beta = \tfrac{1}{2}$, then the summations in the functions $f_m(\theta)$ collapse:
\begin{equation}
f_m(\theta) = \dfrac{(\tfrac{1}{2}+\alpha)_m(\tfrac{1}{2}-\alpha)_m}{m!}\dfrac{\cos\theta_{n,m,m}}{\sin^{m+\alpha+\frac{1}{2}}\left(\frac{\theta}{2}\right)\cos\left(\frac{\theta}{2}\right)}.
\end{equation}

\end{document}